\newcommand{\rank}{\mathsf{rank}}
\newcommand{\diag}{\mathsf{diag}}
\DeclareMathOperator*{\argmin}{arg\,min }
\newcommand{\scal}[1]{\left \langle #1 \right \rangle}
\def\re{{\mathsf{Re}}}
\def\A{{\mathcal A}}
\def\M{{\mathbb M}}
\def\C{{\mathbb C}}
\def\H{{\mathcal H}}
\def\h{{\mathbb H}}
\def\R{{\mathbb R}}
\def\N{{\mathbb N}}
\def\S{{\mathcal{S}}}
\def\Q{{\mathcal{Q}}}
\def\F{{\mathcal{F}}}
\def\k{\mathbf{k}}
\def\x{{\bf x}}
\def\X{{\bf X}}
\def\y{{\bf y}}
\def\l{{\bf l}}
\def\z{{\bf z}}
\def\b{{\bf b}}
\def\n{{\bf n}}
\def\m{{\bf m}}
\def\j{{\bf j}}
\def\p{{\bf p}}
\def\q{{\bf q}}
\def\rank{{\text{ rank}}}
\def\diag{{\text{diag}}}
\def\prox{{\text{prox}}}
\def\A{{\mathcal{A}}}
\newtheorem{theorem}{Theorem}[section]
\newtheorem{proposition}[theorem]{Proposition}
\newtheorem{lemma}[theorem]{Lemma}
\title{On phase retrieval via matrix completion and the estimation of low rank PSD matrices}
\date{}
\author{Marcus Carlsson\thanks{Lund University, Center for Mathematical Sciences. \newline \texttt{marcus.carlsson@math.lu.se, daniele.gerosa@math.lu.se}}  \and Daniele Gerosa\footnotemark[1]}
\begin{document}
\maketitle

\begin{abstract}
Given underdetermined measurements of a Positive Semi-Definite (PSD) matrix $X$ of known low rank $K$, we present a new algorithm to estimate $X$ based on recent advances in non-convex optimization schemes. We apply this in particular to the phase retrieval problem for Fourier data, which can be formulated as a rank 1 PSD matrix recovery problem. Moreover, we provide theory for how oversampling affects the stability of the lifted inverse problem.
\newline

\textbf{Keywords:} \emph{Fourier phase retrieval, low rank matrices, non-convex optimization.}
\newline

\textbf{MSC2010:} 49M20, 49N45, 65K10, 90C26.
\end{abstract}

\section{Introduction}

%\subsection{Discrete phase retrieval problem}
The (discretized) formulation of the phase retrieval problem consists in finding a complex vector \(\mathbf{x}\), usually a discretized signal or image, given a certain amount of \emph{measurements} in form of modulus of scalar products (see \cite{She}), i.e. \begin{equation}\label{the problem}b_k = | \langle \mathbf{a}_k,\mathbf{x} \rangle |^2, \quad k=1, \dots , M  \end{equation} (where \( \mathbf{a}_k \in \mathbb{C}^N  \) are known), and possibly additional geometric constraints. The aim is thus to reconstruct the discrete vector \( \mathbf{x} \in \mathbb{C}^N  \) representing the object. In \cite{Bal} is shown that \(M \ge 4N -2 \) generic complex measurements are needed in order to be able to distinguish two different signals (up to a phase); in \cite{Con} the lower bound was improved to \( M \ge 4N-4 \). This number is believed to be close to optimal.

A possible combinatorial approach to the problem has been described in \cite{Sah}, together with the proof that, as formulated, the problem is NP-hard. An optimization-based standard solution technique would be to solve the least-squares minimization problem \begin{equation} \min_{\mathbf{x} \in \mathbb{C}^N } \sum_{k=1}^{M} ( | \langle \mathbf{a}_k,\mathbf{x} \rangle |^2 -b_k)^2,    \end{equation} but the quadratic terms in the objective function makes the problem highly non-convex and therefore hard to solve. A convex optimization approach was suggested in \cite{Can}, called PhaseLift, and this note considers an improvement of this approach. We recommend \cite{Fog} for a pleasant overview of recent advances regarding PhaseLift and related techniques.

A lot of work has been devoted to deal with cases in which \( \mathbf{a}_k \) are Gaussian measurements, see e.g. \cite{Can4}, but these algorithms seems to work poorly with Fourier data. The main objective of the present article is to provide improvements of the PhaseLift approach, with a particular focus on Fourier measurements, based on recent advances in non-convex low rank approximation algorithms. The theory relies on the quadratic envelope applied to the indicator functional of matrices with a fixed predetermined rank, and can be used also for any other PSD-estimation problem of fixed low rank (Section \ref{lrpsd}).

\subsection{Physical background}
The phase retrieval problem appears frequently for instance in elecrodynamics; the single scalar complex field \( \Psi (x,y,z,t)  \), often called \emph{wave-function}, solution of the d'Alembert equation \[ \left(  \frac{1}{c^2} \frac{\partial}{\partial t^2 } - \nabla^2   \right) \Psi(x,y,z,t)=0, \]is enough to describe the electromagnetic disturbance in the free space. Making use of Fourier integral, the wave-function can be spectrally decomposed as superposition of monochromatic fields \[ \Psi(x,y,z,t)= \frac{1}{\sqrt{2 \pi}} \int_0^\infty \psi_\omega (x,y,z) \exp(- i \omega t) \, d \omega  \]where the spatial wave-function \( \psi_\omega(x,y,z)  \) associated with a given monochromatic component of the spectral decomposition of \( \Psi \) solves the Helmholz equation \[ ( \nabla^2  +k^2  ) \psi_\omega (x,y,z) = 0, \quad k=\omega/c  \] with suitable boundary conditions. In Coherent Diffractive Imaging (CDI) an object (sample) is illuminated by a coherent almost monochromatic wavefield and subsequentially the diffracted far-field intensity pattern is measured by detectors. The current detectors measure intensities but they are not able to measure the phase of the diffracted pattern; the \emph{phase retrieval problem} consists then in retrieving the object knowing the aforementioned intensity plus, in general, some additional physical constraints.  In the Fraunhofer regime, which occurs whenever the Fresnel number \( N_F := b^2 / \lambda \Delta \ll 1  \) (\(b\) being the diameter of the region occupied by the sample and \(\Delta \) the distance between the sample and the detector) it can be shown (see for instance \cite{Pag}) that the \( 2 \)-dimensional propagated disturbance \( \psi_\omega \) measured at \( z = \Delta \ge 0 \) (where the optical axis is considered to be coincident with the \(z\) axis) is \[ \psi_\omega (x,y,z=\Delta) \longrightarrow - \frac{i k \exp{i k \Delta}}{\Delta} \exp \left[ \frac{i k}{2 \Delta} (x^2 + y^2) \right] \widehat{\psi_\omega} \left( k_x = \frac{k x}{\Delta}, k_y = \frac{ky}{\Delta},z=0 \right)   \]where \(k\) is the wavenumber and \( \widehat{\cdot} \) the \(2\)-dimensional Fourier transform. In words this says that "the propagated disturbance has the form of a paraxial modulated spherical wave with the modulation being proportional to (a transversely scaled form of) the two-dimensional Fourier transform of the unpropagated disturbance" (\cite{Pag}).

A family of algorithms developed for solving the phase retrieval problem in combination with a priori knowledge of the support of the object, relies on the simple idea of alternatingly adjust the support of the image and the modulus of its Fourier transform. This goes back to Gerchberg and Saxton (\cite{Ger}) and to Fienup (\cite{Fie}), even though the pool of mathematical ideas, borrowed from convex analysis, goes back at least to Bregman (\cite{Bre}). The most famous is the \emph{error-reduction algorithm} which is essentially a non-convex adaptation of the idea of Projection On Convex Sets (cfr. POCS algorithm, chatper 5 of \cite{Bau}): if \( \psi_O  \) is the object we want to reconstruct, supported in \( C \subseteq \mathbb{R}^2  \), error-reduction performs successive projections between the two sets \[  \{ \psi \in C^\infty _c \, : \, |\widehat{\psi}|= |\widehat{\psi_O}|  \} \  \text{ and } \ \{ \psi \in C^\infty _c \, : \, \text{supp}(\psi) \subseteq C  \}.   \]The drawback is that the first set is not convex and therefore there are no theoretical guarantees of the convergence of this scheme. Further algorithms have of course been developed; in \cite{Mar} the projections are replaced with reflections, in  \cite{Mare} a Newton-type scheme with Tikhonov regularization is proposed, to mention two recent contributions. These methods have been successfully applied with real data, but the non-convexity issue persists.

\subsection{PhaseLift approach}\label{secphaselift}
 In the present note we focus our attention to a \emph{lifting} approach popularized by \cite{Can} and \cite{Can2}. For \( A_k = \mathbf{a}_k  \mathbf{a}_k ^*  \), where \( {}^*  \) is the conjugate transpose and the latter product is the usual matrix multiplication, we define a linear operator $ \mathcal{A} : \h_N (\mathbb{C}) \to \mathbb{C}^M$  via \begin{equation}\label{lifted equations} X \mapsto \begin{pmatrix} \langle A_1 , X \rangle_F \\ \langle A_2 , X \rangle_F \\ \vdots \\ \langle A_M , X \rangle_F \end{pmatrix}\end{equation} being \( \langle \cdot , \cdot \rangle_F  \) the Frobenius inner product and $\h_N$ the space of $N\times N$ Hermitian matrices. Noticing that \( | \langle \mathbf{a}_k,\mathbf{x} \rangle |^2 = \langle \mathbf{a}_k  \mathbf{a}_k ^* , \mathbf{x}  \mathbf{x} ^* \rangle_F \) and recalling that every positive semidefinite matrix \( X \in \h_N (\mathbb{C})  \) with \(\text{rank}(X)=1\) admits a factorization of the type \(\mathbf{x} \mathbf{x} ^* = X  \) with \( \mathbf{x} \in \mathbb{C}^N \), the quadratic phase retrieval problem is lifted onto a linear one with dimension squared; the problem \eqref{the problem} to be solved is now to find a suitable rank \(1\) PSD-matrix \(X\) such that \[ \mathcal{A} (X) = \mathbf{b}  .\] In real applications, the ``perfect measurement'' $b_k=|\langle \mathbf{a}_k,\mathbf{x} \rangle|^2$ is contaminated by noise, so the problem can be re-casted as an optimization one \begin{equation}\label{minrank} \min \, \|\mathcal{A}(X)-\mathbf{b}\|^2 \ \text{subject to } \text{rank}(X)=1, X \succcurlyeq 0.     \end{equation} The constraint \( \text{rank}(X)=1 \) is however highly non-convex, so a convex relaxation has been proposed in \cite{Can} based on the \emph{nuclear norm}, known for being ``low-rank inducing'' (cfr. \cite{Recht}): \begin{equation}\label{minnuc} \argmin \, \lambda \| X \|_* + \frac{1}{2}\| \mathcal{A}(X) - \mathbf{b} \|_2^2 \ \text{subject to } X  \succcurlyeq 0.  \end{equation} The parameter $\lambda$ gives a tradeoff between low rank of the sought minimum $X$ and data fit, and this term also induces a bias which scales with $\lambda$. For this reason, in practice one does not in general find a rank 1 matrix, but a low rank one from which a suitable vector candidate $\mathbf{x}$ can be extracted, we refer to \cite{Can} for the details. %Since the problem is convex, it can be solved using alternating algorithms like Forward Backward Splitting (FBS) or the Alternating Direction Method of Multipliers (ADMM).

\subsection{Innovations and discussion of the lifting trick}

While recognizing the PhaseLift as an important and celebrated contribution, we have found that this method suffers from a number of drawbacks making its practical use limited.
The main issue is the size of the involved objects due to the lifting process. A typical (CDI) measurement generates an image of at least $100\times 100$ which leads to a vector $\mathbf{x}$ of size $10^4$, which in turn yields a $10^4\times 10^4$-matrix which is then fed to an iterative algorithm which need to compute eigenvalue factorization. The time complexity of the latter, without any implementation tricks, is $O((10^4)^3)=O(10^{12})$.

The second drawback stems from the fact that one usually need to run the algorithm several times in order to find a suitable value of $\lambda$, and a third drawback is that one typically need lots of linearly independent measurements $(A_1,\ldots,A_M)$ for stable recovery. As mentioned initially, one needs at least $M=4N$ measurements for having an (essentially) unique solution in the noiseless case. \cite{Can} suggests working with $M\geq cN\log(N)$ where $c$ is some (unknown) constant, whereas in their numerical section they use $M=6N$ where $N=128$. When dealing with Fourier data, one commonly used method for boosting $M$ is simply oversampling in the Fourier domain \cite{Mia}, but it was observed numerically in \cite{Can2} that this gives an ill-posed inverse problem (see Sections 2.1 and 4.4.3), which led them to instead suggest the use of e.g.~masks to increase the amount of measurements.

In this article we will prove that oversampling indeed does not give rise to more \textit{linearly independent} data beyond $M\approx 2^d N$, where $d$ is the dimension of the problem. In other words it is pointless to oversample more than a factor 2 in each dimension. This also shows that any method based on the lifting principle \eqref{lifted equations} is bound to fail unless combined with masks or similar tricks, at least for $d\leq 2$. Furthermore, we will rewrite \eqref{minrank} into an optimization problem where the constrains are built into the functional to be minimized, and then use the \textit{quadratic envelope} to regularize this functional. Ideally one would like to work with the convex envelope of the functional, but this is not doable in practice. The quadratic envelope yields a non-convex continuous surrogate which has recently been shown to have the desirable property of yielding a regularizer which, just like the convex envelope, does not move the global minima \cite{Car2}. While not completely removing other local minimizers, it reduces the amount and in practice it seems that the corresponding algorithm can find the global minimizer in fairly difficult problems. In other words we suggest a new framework which seems to be able to solve the original problem \eqref{minrank}. We refrain from proving this claim mathematically, but remark that in \cite{Car4} an analogous statement is rigorously shown for the similar problem of low rank recovery without PSD constraints. Here we satisfy with developing the algorithm, with a particular focus on efficient implementation, as well as numerically demonstrating several desirable features (see Section \ref{sec numerical});
\begin{itemize}
\item the algorithm can solve any problem of the type \eqref{minrank} but with a predetermined rank $K$, not necessarily $K=1$.
\item The nuclear norm approach \eqref{minnuc} yields matrices with rank larger than 1, hence using more degrees of freedom. Despite this, the proposed method gives a better data fit while at the same time fulfilling the rank constraint.
\item In \cite{Can2} a reweighted version of \eqref{minnuc} is used, which is known for better accuracy, (but this algorithm is non-convex as well). In terms of reconstruction accuracy the two algorithms now perform similarly. Benefits of the proposed algorithm is that it finds the correct rank and does so within fewer iterations, plus that it relies on a more developed theoretical framework.
\end{itemize}

The paper is structured as follows; Section \ref{sec quadratic} presents the mathematical details behind the new general fixed rank PSD-matrix estimation algorithm, Section \ref{sec Fourier} presents our theoretical findings regarding the application to phase retrieval and sampling issues, Section \ref{sec implementation} covers implementation details and finally Section \ref{sec numerical} concludes with some numerical illustrations.

\subsection{Notation}
$\M_N$ will denote the set of $N\times N$ complex matrices and $\h_N$ the subset of Hermitian (i.e.~self-adjoint) matrices. $N$ is the size of the unknown vector \textit{before lifting to a problem with $N^2$ unknowns}, and $M$ is the amount of measurements, which we assume is larger than $N$. Images and higher dimensional objects are represented as tensors $\otimes_{j=1}^d \C^n$ and the measurements then take place in the Fourier domain which we discretize as $\otimes_{j=1}^d \C^m$, where $m\geq n$. Hence $N=n^d$ and $M$ is a constant multiple $N_m+1$ of $m^d$, where $N_m$ is the number of masks. $\A$ denotes the operator \eqref{lifted equations} and its ``matrixification'' (as described in Section \ref{ef grad}) is denoted by $\textbf{A}$, except in the case of pure Fourier data, in which case we denote these objects by $\F$ and $\textbf{F}$ respectively. The quadratic envelope is denoted by $\Q_\gamma(f)$. $\circ$ denotes composition of two functions, in particular if $f$ acts on $\R^N$ then $f\circ\lambda$ acts on $\h_N$, where $\lambda$ denotes the eigenvalues of any given $X\in\h_N$ ordered non-increasingly.

\section{Quadratic envelope approach to low rank recovery}\label{sec quadratic}
	
\begin{wrapfigure}{r}{0.3\textwidth}
\begin{center}
\includegraphics[width=0.28\textwidth]{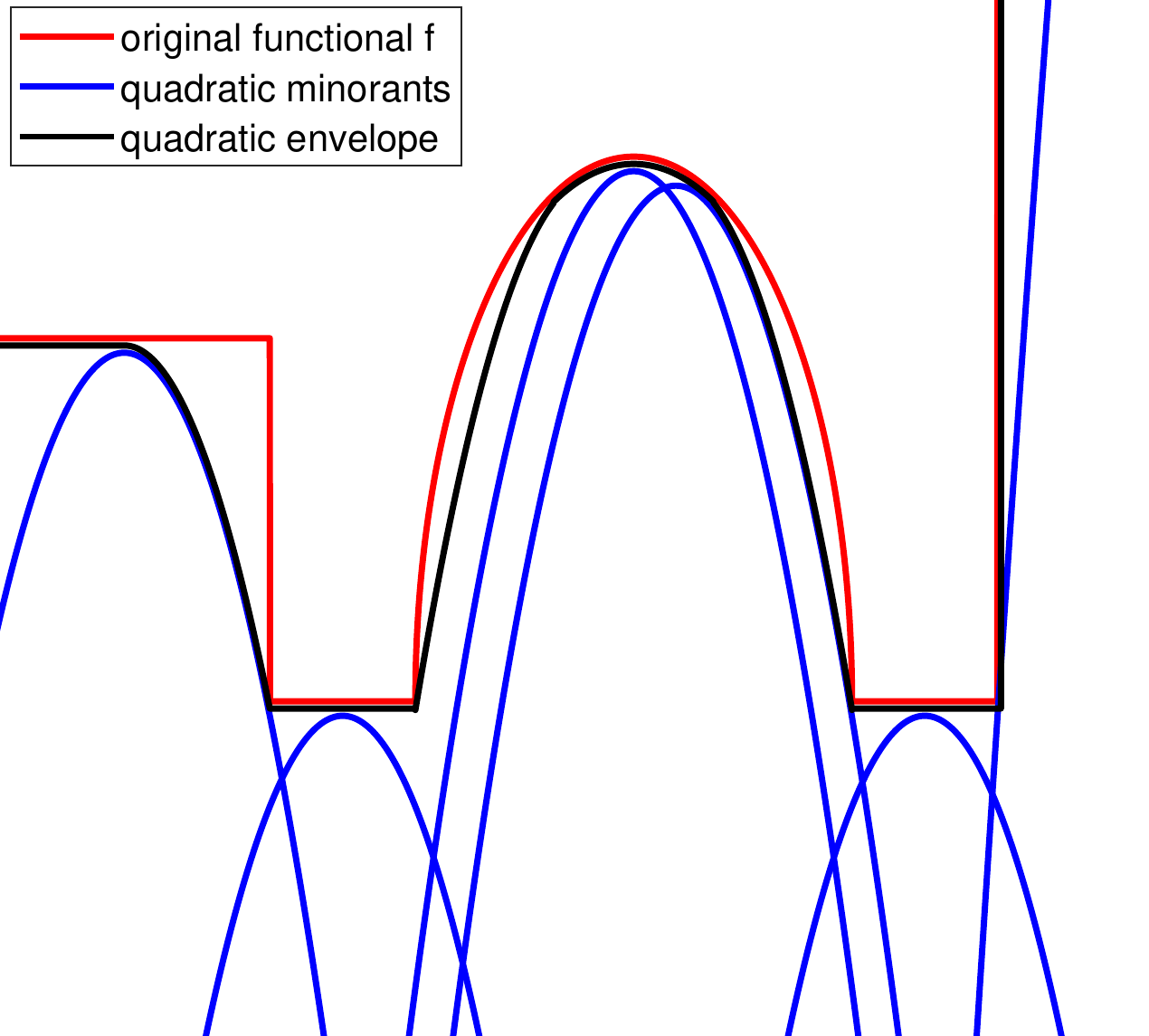}
\end{center}
\caption{Quadratic envelope (black) of a function (red) formed by taking supremum of quadratic minorants (blue)}
\end{wrapfigure}

Let \( \mathcal{H} \) be a Hilbert space, \( f : \mathcal{H} \to \mathbb{R} \cup \{ \infty \} \) a non-convex penalty functional and set \[\mathcal{R}(\x)= f(\x) + \frac{1}{2} \| \mathcal{A}(\x) - \b \|^2. \]  where $\A$ is a linear operator from $\H$ to some other Hilbert space in which the ``measurement'' $\b$ lives. In \cite{Car2} the \emph{quadratic envelope} \[ \mathcal{Q}_\gamma (f)(\x) := \sup_{ \alpha \in \mathbb{R}, \y \in \mathcal{H} } \left\{ \alpha - \frac{\gamma}{2} \| \x- \y\|^2 \, : \, \alpha - \frac{\gamma}{2} \|\cdot - \y\|^2 \le f    \right\} \]	 is studied as a means of regularizing functionals of this type. It is shown that the regularized functional \[ \mathcal{R}_{reg}(\x) = \mathcal{Q}_\gamma (f)(\x) + \frac{1}{2} \| \mathcal{A}( \x) - \b \|^2  \] has some desirable properties when \( \|\mathcal{A}\|^2 < \gamma  \). Most notably, \( \mathcal{R}_{reg}   \) lies between \( \mathcal{R} \) and its lower semi-continuous convex envelope, any local minimizer of \( \mathcal{R}_{reg}  \) is a local minimizer of \( \mathcal{R}  \) and the global minimizers of \( \mathcal{R}  \) and \( \mathcal{R}_{reg}   \) coincide.

In \cite{Car} the quadratic envelope has been studied further for the case of the famous $\ell^0-\ell^2$ problem in which \( \mathcal{H} = \mathbb{R}^N  \) and $f(\x)=\|\x\|_0$ or, in case the sought degree of sparsity $K$ is known,  \[ f(\x) = \iota_K (\x) = \begin{cases} 0 & \text{if } \|\x\|_0 \le K \\ +\infty & \text{otherwise}. \end{cases}  \] Here $\|x\|_0$ denotes the number of non-zero entries in $\x$. In these cases \(\mathcal{R}_{reg} \) has been numerically compared to the more classical \( \ell^1 \)-penalty in solving the problem of retrieving a sparse signal. In the companion work \cite{Car4} the analysis has been lifted to the space of $N\times N$-matrices \( \mathcal{H} = \mathbb{M}_N(\mathbb{C})  \), and \( f(X) = \rank(X) \) or \[ f(X) = \iota_{R_K} (X) = \begin{cases} 0 & \text{if } \rank(X) \le K \\ +\infty & \text{otherwise}. \end{cases}  \] The vector problem and the matrix problem are closely related; note that $\iota_{R_K}(X)=\iota_K(\sigma(X))$ where $\sigma(X)$ denotes the singular values of $X$. It turns out that $$\Q_\gamma(\iota_{R_K})(X)=\Q_\gamma(\iota_K)(\sigma(X))$$ and, more importantly, if $X$ has SVD $X=U\Sigma V^*$, that $\prox_{\iota_{R_K}}(X)=U\diag(\prox_{\iota_K}(\sigma(X))) V^*$. In other words, if we can compute the proximal operator in the scalar case, the matrix case follows immediately. In \cite{Car4} the theory is much further developed for the case $f=\iota_{R_K}$; in particular we show that for noisy measurements the functional $\mathcal{R}_{reg}$ has a unique local minima which is also the solution of $\argmin_{\rank(X)\leq K}\|\A(X)-\b\|$. The new ingredient in the present contribution is basically the PSD condition, and we satisfy with numerically testing the machinery on the Phase Retrieval problem as well as some generic low rank recovery problems with $K>1$.

\subsection{Low-rank Positive Semi-Definite problems}\label{lrpsd}
We now come to the new material for this paper. In this section we set $\H$ to be the set of Hermitian matrices $\h_N \subset \mathbb{M}_N (\mathbb{C})$ and consider the problem of searching for a PSD matrix with fixed rank $K$. Of course, for the PhaseLift problem we will set $K=1$, but we develop the theory for general $K\in\N$.
Motivated by the previous section we introduce the function \( \iota_K^{+} : \mathbb{R}^N \to \{0,+\infty\}  \) defined by \[ \iota_K^{+} (\x) = \begin{cases} 0 & \text{if } \|\x\|_0 \le K \text{ and } \x \ge 0 \\ +\infty & \text{otherwise}. \end{cases}   ,\] where $\x\ge 0$ is interpreted elementwise. If $R_K^+$ denotes the set of positive semi-definite matrices with rank $\leq K$, then it is easy to see that $\iota_{R_K^+}(X)=\iota_{K}^+(\lambda(X))$ where $\lambda(X)$ denotes the eigenvalues of $X\in\H$.
Given a transform $\A$ we then have that the fixed-rank minimization problem
\begin{equation}\label{fix rank}
\argmin_{X\in\h_N^+:~\rank(X)\leq K}\|\A(X)-\b\|^2
\end{equation}
(where $\h_N^+$ denotes the subset of PSD matrices) is equivalent with
\begin{equation}\label{fix rank reformulated}
\argmin_{X\in\h_N}\iota_{R_K^+}(X)+\frac{1}{2}\|\A(X)-\b\|^2.
\end{equation}
This in turn has the same global minimizer as the regularized problem \begin{equation}\label{fix rank regularized}
\argmin_{X\in\h_N}\Q_\gamma(\iota_{R_K^+})(X)+\frac{1}{2}\|\A(X)-\b\|^2
\end{equation}
as long as $\gamma>\|\A\|^2$, as argued earlier. The latter is a continuous (in its domain $\h_N^+$, cfr. Proposition 3.2 of \cite{Car2}) functional whose stationary points can be found for instance via Forward-Backward Splitting scheme (also know as Proximal Gradient Method: see for instance \cite{Bec})\footnote{This statement has a theoretical foundation: for a lower semicontinuous and semialgebric \(f : \mathbb{R}^N \to \mathbb{R} \cup \{ \infty\} \), the function \( \Q_\gamma (f) : \mathbb{R}^N \to \mathbb{R} \cup \{ \infty\} \) is lower semicontinuous and semialgebric (\cite{Car2}), and therefore by a theorem due to Bolte-Daniilidis-Lewis (\cite{Bol}) it has the local Lojasiewicz-Kurdyka property. In \cite{Att} it is showed that the sequence generated by FBS, a particular case of their \textbf{Algorithm 3}, converges to a stationary point of the functional, or else is unbounded. }, as long as the proximal operator of $\Q_\gamma(\iota_{R_K^+})$ is computable. We now describe how this can be done. We recall that a function \( f : \mathbb{R}^N \to \mathbb{R} \) is said to be \emph{symmetric} if \( f(\x)=f(\Pi \x) \) for every permutation \( \Pi \) and for every \( \x \in \mathbb{R}^N \).
We also recall that for a lower semi-continuous function \( g : \mathcal{V} \to [-\infty,\infty]  \), being \( \mathcal{V}  \) an Hilbert space, the (possibly empty or set-valued) \emph{proximal operator} is defined by \[ \text{prox}_g (w) = \argmin_{v \in \mathcal{V}} \left( g(v) + \frac{1}{2} \|v-w\|_{\mathcal{V}} ^2 \right).   \]

\begin{proposition}\label{p1}
Let $f:\R^N\rightarrow [0,\infty]$ be a symmetric function and consider $F:\h_N\rightarrow [0,\infty]$ defined by $F(X)=f(\lambda(X))$. Then $\Q_\gamma(F)=\Q_\gamma(f)\circ \lambda$ and, for $\rho>\gamma$, we have $$\emph{prox}_{\Q_\gamma(F)/\rho}(X)=U \emph{diag}(\emph{prox}_{\Q_\gamma(f)/\rho}(\lambda(X)))U^*,$$
where $U\emph{diag}(\lambda(X))U^*$ is a spectral decomposition of $X$.
\end{proposition}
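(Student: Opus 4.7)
My plan is to lean on the unitary invariance of $F$, which follows from the symmetry of $f$, combined with the classical Hermitian Hoffman--Wielandt inequality
\[ \|X-Y\|_F^2 \;\geq\; \sum_i\bigl(\lambda_i(X)-\lambda_i(Y)\bigr)^2, \]
valid for $X,Y\in\h_N$ with both spectra sorted non-increasingly and attained when $X$ and $Y$ share a common eigenbasis with matching eigenvalue order. A useful preliminary observation is that $\Q_\gamma(f)$ inherits the symmetry of $f$: permuting $y$ in the defining supremum and using $f(\Pi z)=f(z)$ leaves the set of admissible quadratic minorants invariant.

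For the identity $\Q_\gamma(F)(X)=\Q_\gamma(f)(\lambda(X))$, the plan is to establish the two inequalities by transferring quadratic minorants between the scalar and matrix problems. For the direction $\Q_\gamma(F)(X)\geq\Q_\gamma(f)(\lambda(X))$, given an admissible scalar pair $(\alpha,y)$, I would set $Y=U\diag(y)U^*$ where $U$ is a unitary diagonalizing $X$; then $\|X-Y\|_F^2=\|\lambda(X)-y\|^2$, while for every test matrix $X'$ the Hoffman--Wielandt inequality together with the symmetry of $f$ gives $\alpha-\tfrac{\gamma}{2}\|X'-Y\|_F^2\leq f(\lambda(X'))=F(X')$, showing that $(\alpha,Y)$ is admissible in the matrix problem. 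For the reverse direction, I would test an admissible matrix pair $(\alpha,Y)$ against $X'=W\diag(z)W^*$ with $W$ diagonalizing $Y$, which reduces admissibility to the scalar statement $\alpha-\tfrac{\gamma}{2}\|z-\lambda(Y)\|^2\leq f(z)$ for all $z\in\R^N$; hence $(\alpha,\lambda(Y))$ is admissible scalarly, and one more application of Hoffman--Wielandt to the pair $(X,Y)$ closes the argument.

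For the proximal formula I would split the minimization defining $\prox_{\Q_\gamma(F)/\rho}(X)$ into an inner optimization over the eigenbasis of $Y=V\diag(z)V^*$ and an outer one over its spectrum $z$. Expanding the Frobenius distance as $\|X\|^2+\|z\|^2-2\,\re\tr(XY)$ and invoking the von Neumann trace inequality (equivalently, Birkhoff--von Neumann applied to the doubly stochastic matrix $|U_X^*V|^2$), the inner minimum becomes $\min_V\|X-Y\|_F^2=\|\lambda(X)-z^\downarrow\|^2$, attained at $V=U_X$ with $z$ sorted compatibly with $\lambda(X)$. Since $\Q_\gamma(F)(Y)=\Q_\gamma(f)(z)$ depends only on $z$ and $\Q_\gamma(f)$ is symmetric, the outer problem collapses exactly to $\prox_{\Q_\gamma(f)/\rho}(\lambda(X))$, yielding the stated formula. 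The hypothesis $\rho>\gamma$ enters only to guarantee that this scalar prox is well defined, via the standard convexity of $\Q_\gamma(f)+\tfrac{\gamma}{2}\|\cdot\|^2$ which makes the scalar objective strongly convex. The main delicate point I expect throughout is the careful bookkeeping of permutations between the spectra of $X$, $Y$ and the scalar arguments $y$, $z$; this is uniformly absorbed by appealing to the symmetry of $f$ and of $\Q_\gamma(f)$ whenever a reordering needs to be inserted.
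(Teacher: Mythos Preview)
Your argument is correct, and it takes a genuinely different route from the paper's proof.

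For the identity $\Q_\gamma(F)=\Q_\gamma(f)\circ\lambda$, the paper does not work directly with the sup-of-quadratic-minorants definition. Instead it invokes the factorization $\Q_\gamma=\S_\gamma\circ\S_\gamma$ (from \cite{Car2}) and proves the single statement $\S_\gamma(F)=\S_\gamma(f)\circ\lambda$ via the von Neumann trace inequality; the claim for $\Q_\gamma$ then follows by iterating. Your approach is more self-contained (it avoids the auxiliary transform $\S_\gamma$) at the price of having to check two inequalities and juggle admissible pairs in both directions; the paper's approach is shorter because $\S_\gamma$ is a single supremum rather than a sup of constrained minorants.

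For the proximal formula the contrast is sharper. The paper uses subdifferential calculus: it characterizes $Z\in\prox_{\Q_\gamma(F)/\rho}(X)$ by the inclusion $Z-X\in\partial\Q_\gamma(F)(Z)/\rho$, appeals to the convexity of $\Q_\gamma(F)+\tfrac{\gamma}{2}\|\cdot\|^2$, and then invokes a result from Borwein--Lewis on subdifferentials of spectral functions to conclude that $Z$ and $X$ share eigenvectors with $\lambda(Z)\in\prox_{\Q_\gamma(f)/\rho}(\lambda(X))$. Your direct two-stage minimization (inner over the eigenbasis via von Neumann, outer over the spectrum) is more elementary and avoids the external reference; the paper's route, on the other hand, handles the repeated-eigenvalue case explicitly (showing $\lambda_i(Z)=\lambda_j(Z)$ whenever $\lambda_i(X)=\lambda_j(X)$, so that \emph{any} spectral decomposition of $X$ works), a point you leave implicit but which follows from the uniqueness of the strongly convex scalar minimizer together with the symmetry of $\Q_\gamma(f)$.
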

\begin{proof}
By Proposition 3.1 in \cite{Car2}, we have \( \Q_\gamma (f) = \S_\gamma(\S_\gamma(f)) \), where \( \S_\gamma (f) (y) = \sup_x - f(x) - \frac{\gamma}{2} \| x - y \|^2   \). Note that \[ \S_\gamma (F)(Y)= \sup_X - F(X)  - \frac{\gamma}{2} \|X-Y\|_F ^2 = \sup_X - f(\lambda(X)) - \frac{\gamma}{2} \left( \|X\|_F^2 + \|Y\|_F^2 - 2 \re \langle X,Y \rangle_F \right)  \] and that \( \re \langle X , Y \rangle_F \le \sum_{j=1}^N {\lambda}_j (X) {\lambda}_j (Y)  \), by a variation of the von Neumann trace inequality (cfr.~\cite{Lew}). Moreover the equality holds if and only if there exists a unitary matrix \( V\) such that \( V^* X V = \text{diag}{\lambda}(X) \) and \( V^* Y V = \text{diag}{\lambda}(Y) \). We may therefore assume without loss of generality that \( X=V \text{diag}(\xi) V^* \) for some \( \xi \in \mathbb{R}^N \), being \(V \text{diag}\lambda(Y) V^*  \) a spectral decomposition of \(Y\). For such \(X\) we have that \( \langle X,Y \rangle_F = \sum \xi_j \lambda_j(Y)  \), (where we do not require that \( \xi \) is non-decreasing). By the symmetry of \(f\) we have that \( f(\lambda(X))=f(\xi)  \), and therefore \[ \S_\gamma (F)(Y)= \sup_{\xi \in \mathbb{R}^N  } - f(\xi) - \frac{\gamma}{2} \| \xi - \lambda(Y)\|^2 = \S_\gamma (f) (\lambda(Y)).  \]The corresponding claim for \( \Q_\gamma \) follows immediately by \(  \Q_\gamma (f) = \S_\gamma(\S_\gamma(f)) \).

Let us prove the second part of the statement. We have that \( Z \in \text{prox}_{\Q_\gamma (F)/\rho} (X)  \) if and only if \(Z\) minimizes the functional \( Y \mapsto \Q_\gamma (F) (Y) + \frac{\rho}{2} \|X-Y\|_F ^2   \) which is convex, since \(\Q_\gamma (F) (Y) + \frac{\gamma}{2} \|X-Y\|_F ^2  \)  is the l.s.c. convex envelope of \(F (Y) + \frac{\gamma}{2} \|X-Y\|_F ^2 \) (by Theorem 3.1 \cite{Car2}) and \(\Q_\gamma (F) (Y) + \frac{\rho}{2} \|X-Y\|_F ^2 = \Q_\gamma (F) (Y) + \frac{\gamma}{2} \|X-Y\|_F ^2 + \frac{\rho-\gamma}{2} \|X-Y\|_F ^2\). This in turn happens if and only if \( Z - X \in \partial \Q_\gamma (F) (Z)/\rho = \partial (\Q_\gamma (f )\circ \lambda)(Z) / \rho  \). From Section 5.2 of \cite{Bor} follows that the latter holds if and only if \(Z-X\) and \(Z\) share the same eigenvectors and $$ \lambda(Z)-\lambda(X)=\lambda (Z-X) \in \partial \Q_\gamma (f) (\lambda(Z)) / \rho.$$ This is equivalent to \( \lambda(Z) \in \text{prox}_{\Q_\gamma (f)/\rho} (\lambda(X))  \) and by the symmetry of $f$ we have that $\lambda_i(Z)=\lambda_j(Z)$ whenever $\lambda_i(X)=\lambda_j(X)$. Hence any unitary matrix $U$ of eigenvectors to $X$ are also eigenvectors to $Z$, and since \( Z = U \text{diag}(\lambda(Z)) U^*\), the desired conclusion follows.
\end{proof}

We remark that the maximum negative quadrature of $\Q_\gamma(F)$ is $\gamma$, as shown in \cite{Car2}, so the condition $\gamma<\rho$ ensures that the proximal operator is single valued (since the corresponding minimization problem is strongly convex).

Of course we are interested in the concrete case of $f=\iota_K^+$, but it turns out that $\Q_\gamma(\iota_K^+)$ has a rather complicated expression. Luckily, the related transform $\S_\gamma(\iota_K^+)$ has a simpler expression, and it follows that we can still compute the $\prox_{\Q_\gamma(\iota_K^+)}$ by duality; we postpone the details of this to Section \ref{ef prox}. In the coming section we investigate the structure of the operator $\A$ for the phase retrieval problem with multidimensional data and Fourier measurements.

\section{Images and Fourier data}\label{sec Fourier}

Let us return to the phase retrieval problem. We consider ``images'' in $d$ dimensions, which can be realized as the tensor product vector space $\otimes_{j=1}^d \C^{n}$, where we use the same number of points in every dimension for simplicity only. This linear vector space can of course be identified with $\C^N=\C^{n^d}$ by introducing some basis, but we will see that it is often convenient to actually skip this step and work directly in the more abstract setting. In this case a rank 1 matrix corresponds to a linear operator on $\otimes_{j=1}^d \C^{n}$ of the form $x\otimes y$, and the matrix is PSD if and only if the operator can be written $x\otimes \overline x$, where the bar indicates complex conjugation.

 Given elements $a_{\k}\in \otimes_{j=1}^d \C^{n}$, where $\k$ runs over some set of (usually multidimensional) subindices $S$ (where $M=\# S$), we seek one element $x\in \otimes_{j=1}^d \C^{n}$ such that $$|\scal{x,a_{\k}}|^2=b_{\k},\quad \k\in S.$$
In this new framework, PhaseLift corresponds to the equivalent problem of finding a rank 1 PSD linear operator $X$ on $\otimes_{j=1}^d \C^{n}$ such that \begin{equation}\label{m1}\scal{X,a_{\k}\otimes \overline a_{\k}}=b_{\k},\quad \k\in S.\end{equation}

\subsection{Fourier data and limitations to oversampling}\label{sec oversampling}

In the common case of Fourier measurements, the $a_{\k}$'s are discretizations of pure oscillatory exponential functions, and in this case we denote them by $f_{\k}$ and the corresponding operator by $\F$ in place of $\A$.  We denote by $\ell^2(S)$ the linear vector space of all functions on $S$, so that $b$ naturally identifies with an element of $\ell^2(S)$. Often, we measure on an $m^d$ grid where $(m/n)^d$ is the oversampling factor, i.e. $S=\{0,\ldots, m-1\}^d$ in which case $\ell^2(S)$ is readily identified with $\otimes_{j=1}^d\C^m$. To be more explicit, in this case we have \begin{equation}\label{fkregular}f_\k(\n)=e^{-2\pi i \frac{\k\cdot \n}{m}}\end{equation} for $\k\in \{0,\ldots, m-1\}^d$ and $\n\in\{0,\ldots, n-1\}^d$. However, to allow for unequally spaced sampling we stick to the more general setting where \begin{equation}\label{fk}f_\k(\n)=e^{ i {\zeta_\k\cdot \n}}\end{equation} and $\{\zeta_\k\in\R^d\}_{\k\in S}$ are some frequencies and $S$ some set.
The equation $\eqref{m1}$ can now be written
\begin{equation}\label{m2}\F(X)=b\end{equation}
where $\F: (\otimes_{j=1}^{d} \C^{n}) \otimes (\otimes_{j=1}^{d} \C^{n})\longrightarrow \ell^2(S)$ is the linear operator defined by $\F(X)(\k)=\scal{X,f_{\k}\otimes\overline {f_{\k}}}$.

Since we are working with an ill-posed inverse problem, it is crucial to get as much linearly independent equations as possible. In other words we want to choose $\{\zeta_\k\in\R^n\}_{\k\in S}$ so that $\F$ has maximal rank. The next result basically states that it is pointless to oversample beyond a factor of two.

\begin{lemma}
Independent of how $\{\zeta_\k\in\R^d\}_{\k\in S}$ is chosen, the maximal rank of $\F$ is $(2n-1)^d$.
\end{lemma}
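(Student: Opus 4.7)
The plan is to factor $\F$ through an intermediate lower-dimensional space and then exhibit a choice of frequencies achieving the bound.

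First I would write out $\F(X)(\k)$ entrywise. Using the fact that $(f_\k\otimes\overline{f_\k})_{\n,\m}=e^{i\zeta_\k\cdot(\n-\m)}$, an elementary manipulation of the Frobenius inner product gives
$$\F(X)(\k)=\sum_{\n,\m\in\{0,\ldots,n-1\}^d} X_{\n,\m}\,e^{-i\zeta_\k\cdot(\n-\m)}.$$
The crucial observation is that the exponential depends on $\n$ and $\m$ only through the difference $\l=\n-\m$, which ranges over $L:=\{-(n-1),\ldots,n-1\}^d$, a set of cardinality $(2n-1)^d$. Thus, introducing the auxiliary linear map $T:\h_{N}\to\ell^2(L)$ defined by $T(X)(\l)=\sum_{\n-\m=\l}X_{\n,\m}$, we obtain the factorization $\F=V\circ T$, where $V:\ell^2(L)\to\ell^2(S)$ is given by $V(Y)(\k)=\sum_{\l\in L}Y(\l)e^{-i\zeta_\k\cdot\l}$.

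The upper bound then follows immediately: since $\dim\ell^2(L)=(2n-1)^d$, we have $\rank(\F)\le\rank(V)\le(2n-1)^d$, independent of the choice of $\{\zeta_\k\}$. This is the main content of the lemma and requires no hypotheses on the frequencies.

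For sharpness, I would show $T$ is surjective and exhibit frequencies making $V$ injective. Surjectivity of $T$ is essentially by inspection: for each $\l\in L$ one can realize the indicator $\delta_\l$ as $T$ of a single elementary matrix $E_{\n,\m}$ with $\n-\m=\l$. For $V$, it suffices to choose $(2n-1)^d$ frequencies $\zeta_\k$ indexed by $\k\in\{0,\ldots,2n-2\}^d$ on a regular grid, for example $\zeta_\k=\tfrac{2\pi}{2n-1}\k$; then the matrix representing $V$ on this subcollection is the $d$-fold tensor product of a standard $(2n-1)\times(2n-1)$ DFT matrix, which is unitary up to scaling and hence invertible. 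Consequently $\rank(V)=(2n-1)^d$ and combined with surjectivity of $T$ this yields $\rank(\F)=(2n-1)^d$, matching the upper bound.

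The only genuinely subtle point is verifying the factorization $\F=V\circ T$ cleanly with the correct sign conventions in the Frobenius product; once this is done the rank arguments are immediate. In particular, no nontrivial analysis of the (potentially unstructured) set $S$ is required for the upper bound, which explains the strong statement that \emph{no} oversampling scheme can generate more than $(2n-1)^d$ linearly independent measurements.
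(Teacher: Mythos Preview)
Your upper-bound argument is correct and is essentially the same as the paper's: both hinge on the single observation that $\F$ applied to an elementary tensor depends on the index pair $(\n,\m)$ only through the difference $\n-\m\in\{-(n-1),\dots,n-1\}^d$. You package this as a factorization $\F=V\circ T$ through the $(2n-1)^d$-dimensional space $\ell^2(L)$, whereas the paper phrases it dually by noting that the range of $\F$ is spanned by the at most $(2n-1)^d$ distinct functions $\k\mapsto e^{i\zeta_\k\cdot(\n^1-\n^2)}$; these are two sides of the same coin.

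Two minor remarks. First, your sharpness argument (surjectivity of $T$ plus a DFT choice making $V$ injective) is correct but goes beyond what the paper proves in this lemma: the paper's proof gives only the upper bound and defers achievability to the subsequent proposition on non-degenerate frequency sets, so you have effectively absorbed part of that proposition into the lemma. Second, watch the domain and conjugation conventions: the paper defines $\F$ on all of $(\otimes_j\C^n)\otimes(\otimes_j\C^n)$ rather than $\h_N$, and with the Frobenius convention used there one gets $\F(X)(\k)=\sum_{\n,\m}\overline{X_{\n,\m}}\,e^{i\zeta_\k\cdot(\n-\m)}$ rather than your expression; as you yourself flag, this does not affect the rank count, but it is worth getting right.
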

\begin{proof}
We let $\{e_\n\}_{\n\in\{1,\ldots, n\}^{2d}}$ denote the canonical basis in $\otimes_{j=1}^{2d}\C^n$. The range of $\F$ is then spanned by the functions $\F(e_\n)$ which, if we write $\n=(\n^1,\n^2)$ with $\n^1,\n^2\in\{1,\ldots,n\}^d$, takes the form $$\k\mapsto e^{ i {\zeta_\k\cdot (\n^1-\n^2)}}.$$
The amount of tuples of the form $\n^1-\n^2$ equals $(2n-1)^d$, which gives the desired upper bound.
\end{proof}

We now prove that, given sufficient oversampling and a vise choice of $\{\zeta_\k\}_{\k\in S}$, the rank of $\F$ actually equals the maximal rank $(2n-1)^d$. More precisely, we will need that $\{\zeta_\k\}_{\k\in S}$ is such that $\n\mapsto e^{ i {\zeta_\k\cdot \n}}$ is a linearly independent set on $\otimes_{j=1}^d\C^{2n-1}$ or, if $|S|>(2n-1)^d$, that these functions span the full space $\otimes_{j=1}^d\C^{2n-1}$. Such a choice of $\{\zeta_\k\}_{\k\in S}$ will be called non-degenerate. By considering the DFT it is clear that non-degenerate sets of frequencies exist for all cardinalities of $S$. More generally, say that we pick our $\zeta_\k$'s on an irregular product set, i.e.~suppose that for each $j=1\ldots d$ there are ``coordinate-frequencies'' $\{\zeta_{k}^j\}_{k=1}^{2n-1}$ and the multi-frequencies are formed as $\zeta_\k=(\zeta^1_{k_1},\ldots,\zeta^d_{k_d})$ where $\k\in S=\{1,\ldots,2n-1\}^d$. Then, in order for the multidimensional set $\{\zeta_\k\}_{\k\in\{1,\ldots,2n-1\}^d}$ to be non-degenerate it suffices for each coordinate set $\{\zeta_{k}^j\}_{k=1}^{2n-1}$ is non-degenerate (see e.g.~Theorem 2 in \cite{Hay} or Proposition 4.1 in \cite{And}).

\begin{proposition}\label{p2}
Given a non-degenerate set of frequencies $\{\zeta_\k\}_{\k\in S}$, the rank of $\F$ equals \begin{equation}\label{f1}\rank(\F)=\min(|S|,(2n-1)^d).\end{equation}
\end{proposition}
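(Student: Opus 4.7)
The plan is to identify the range of $\F$ with the column space of an explicit $|S|\times(2n-1)^d$ exponential matrix, and then read off its dimension from the row side by invoking the non-degeneracy hypothesis together with the row-rank/column-rank identity.

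First, from the computation already carried out in the proof of the preceding lemma, the range of $\F$ is spanned by the images $\F(e_\n)$ with $\n=(\n^1,\n^2)\in\{1,\ldots,n\}^{2d}$, and $\F(e_\n)(\k)=e^{i\zeta_\k\cdot(\n^1-\n^2)}$. As $\n^1-\n^2$ ranges over the difference cube $\{-(n-1),\ldots,n-1\}^d$, this reduces the range of $\F$ to the span of the $(2n-1)^d$ functions $g_\m\in\ell^2(S)$ defined by $g_\m(\k)=e^{i\zeta_\k\cdot\m}$. Consequently $\rank(\F)=\rank(V)$, where $V\in\C^{|S|\times(2n-1)^d}$ is the matrix whose $(\k,\m)$-entry equals $e^{i\zeta_\k\cdot\m}$. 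The preceding lemma gives the upper bound $\rank(V)\le\min(|S|,(2n-1)^d)$, so only the matching lower bound is at stake.

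For the lower bound I would use the identity $\rank(V)=\rank(V^T)$ and examine the rows: row $\k$ of $V$ is the function $\m\mapsto e^{i\zeta_\k\cdot\m}$ on the cube $\{-(n-1),\ldots,n-1\}^d$, which differs from the cube used to state non-degeneracy (say $\{1,\ldots,2n-1\}^d$) only by a fixed translation $\m\mapsto\m+(n,\ldots,n)$. Such a translation multiplies row $\k$ by the single nonzero scalar $e^{-i\zeta_\k\cdot(n,\ldots,n)}$, which depends on $\k$ but not on $\m$, so it amounts to left-multiplying $V$ by a nonsingular diagonal matrix and is therefore rank-preserving. By non-degeneracy, the translated rows (indexed by $\k\in S$) are either linearly independent in $\otimes_{j=1}^d\C^{2n-1}$ when $|S|\le(2n-1)^d$, or span $\otimes_{j=1}^d\C^{2n-1}$ when $|S|>(2n-1)^d$. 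In either case the row rank of $V$ equals $\min(|S|,(2n-1)^d)$, and hence so does its column rank $\rank(V)=\rank(\F)$.

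I do not anticipate any real obstacle in this proof: it is essentially the row-column rank identity applied to the explicit matrix extracted from the lemma's computation. The only piece of careful bookkeeping is verifying that the translation between the natural difference cube $\{-(n-1),\ldots,n-1\}^d$ and the cube used in the non-degeneracy definition acts as a left diagonal scaling on $V$ (hence preserves the rank), rather than a column operation that would require additional justification before transferring the non-degeneracy hypothesis to the rows of $V$.
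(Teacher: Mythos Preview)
Your proof is correct and rests on the same core identification as the paper's: the range of $\F$ is the column space of the $|S|\times(2n-1)^d$ exponential matrix $V$, and the non-degeneracy hypothesis controls the \emph{rows} of that matrix. The paper, however, organizes the argument differently: it first treats the square case $|S|=(2n-1)^d$ (where ``columns independent $\Leftrightarrow$ rows independent'' is immediate), and then handles $|S|>(2n-1)^d$ and $|S|<(2n-1)^d$ separately by adding or deleting rows from this full-rank square matrix. Your route is more economical: you invoke the row-rank/column-rank identity once on the rectangular $V$ and read off the answer directly from the non-degeneracy assumption, with the diagonal scaling $V=DV'$ making explicit the translation between the difference cube $\{-(n-1),\ldots,n-1\}^d$ and the cube on which non-degeneracy is stated --- a bookkeeping point the paper leaves implicit. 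The paper's version has the minor advantage of being phrased in terms of the operator $\F$ itself rather than an auxiliary matrix, but your argument avoids the three-way case split and the somewhat tacit step of extending or restricting $S$ to a set of size exactly $(2n-1)^d$.
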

\begin{proof}
First assume that $|S|=(2n-1)^d$ and denote $\F$ in this case by $\F_0$. As in the previous proof we have that the range of $\F_0$ is spanned by $\k\mapsto e^{  i {\zeta_\k\cdot \n}}$ where $\n\in\{-n+1,\ldots,n-1\}$. In this case, there are as many different $\n$'s as there are $\k$'s, and it follows by basic linear algebra that the set of functions of the form $\k\mapsto e^{  i {\zeta_\k\cdot \n}}$ is linearly independent if and only if the set of functions $\n\mapsto e^{  i {\zeta_\k\cdot \n}}$ is, which is true by assumption. This finishes the proof under the assumption that $|S|=(2n-1)^d$.

Given any particular basis for the domain and the codomain, the operator $\F_0$ has a matrix representation $\overrightarrow{\F_0}$ (of dimension $(2n-1)^d\times n^{2d}$). We know from what we have already shown that we have $(2n-1)^d$ linearly independent columns, i.e.~$\overrightarrow{\F_0}$ has full rank. If $|S|>(2n-1)^d$ we can think of $\overrightarrow{\F}$ as adding rows to the matrix $\overrightarrow{\F_0}$, which then impossibly can result in a lower rank. Combined with the previous lemma, this shows that the rank in this case is $(2n-1)^d$. Similarly, if $|S|<(2n-1)^d$ we can think of $\overrightarrow{\F}$ as removing rows from the full rank matrix $\overrightarrow{\F_0}$, which then clearly results in a new full rank matrix. Hence the rank will be $|S|$, and the proof is complete.
\end{proof}

The above proposition is somehow bad news for the "oversampling approach", since it shows that the maximum oversampling factor one could hope for without adding more linearly dependent equations is roughly $2^d$. This conclusion is also backed by the numerical experiments in \cite{Can2}: Section 4.4.3 is devoted to numerically demonstrate that oversampling alone does not yield a well-posed inverse problem. Hence Proposition \ref{p1} can be seen as a theoretical justification of numerical observations in \cite{Can2}. We also validate this conclusion experimentally in Section \ref{sec num over}.
An interesting remark is that two is also a suitable oversampling parameter for other Phase Retrieval methods, see e.g. \cite{Mia} which backs up this conclusion experimentally. This can be understood since $|\hat{x}(\theta)|^2$ is a trigonometric polynomial with $(2n-1)^d$ undetermined coefficients, and it is well known that one needs precisely $(2n-1)^d$
(non-degenerate) measurements to uniquely determine these coefficients. The following interesting papers \cite{Bei,Ben,Bru,Hay,Hua,Osh} contain more information about uniqueness of the Phase Retrieval problem, (as well as other interesting algorithms and plenty of other information).
A curious remark is that the factor $2^d$ does get better with the dimension $d$, and it is also well known in the experimental community that 3d reconstructions are more stable; see e.g.~\cite{Cha,Chu}. For the moment, the case $d=3$ is too computationally expensive with the present method, but we hope to address this shortcoming in future work. The proposition also shows that one does not increase stability, or degree of linear independence to be more precise, by picking $\{\zeta_\k\}$ from some irregularly sampled grid. Hence we find no motivation to deviate from the simplest possible choice, i.e. picking an $m$ in the range $n\leq m <2n$ and use \eqref{fkregular} or, which is the same, setting $S=\{0,\ldots,m-1\}^d$ and $\zeta_\k=\k/m$ in \eqref{fk}.

\subsection{Stabilizing PhaseLift by adding new equations}\label{stabilizing}

Section 2.1 of \cite{Can2} provide a list of experimental methods which can be employed in order to add further linearly independent measurements to those given by the operator $\F$ from the previous section. In particular it is argued that one can use \textit{masks}, that is, you create new measurement vectors $f_\k^C$ by introducing a mask which only lets light through in a region $C$. Mathematically, this amounts to multiplying the image $\x$ with the characteristic function $\chi_C$. If we use regularly sampled measurements as in \eqref{fkregular}, this gives new measurement-functions via the formula $f_\k^{C}(\n)=e^{-2\pi i \frac{\k\cdot \n}{m}}\chi_C(\n)$ where $C$ is realized as a subset of $\{0,\ldots,n-1\}^d$, and $\k$ runs over the index set $\{0,\ldots,m-1\}^d$. Also ptychography, optical grating and oblique illuminations are considered. However additional linearly independent measurements are created, we can form an operator $\A$ by extending $\F$ so that the problems \eqref{fix rank}-\eqref{fix rank regularized} has an essentially unique solution.

While the methods mentioned above are great from a mathematical perspective, they are often not feasible or slow or expensive from a physical perspective in a concrete experiment. Due to this, a priori estimates on the support of the object measured is still by far the most common method used in practice. This was pioneered by Fienup in \cite{Fie} and presently the most popular methods to solve the phase retrieval problem in this context are Hybrid Input Output (\cite{Fie}), Difference Map (\cite{Els}) or Relaxed Averaged Alternating Reflection (\cite{Luk}). A simple way to incorporate support constraints into the scheme \eqref{fix rank}-\eqref{fix rank regularized} is to construct $\A$ by extending $\F$ by simply adding linear equations $X(\m,\n)=0$ for all pairs $(\m,\n)\in (\{0,\ldots,n-1\}^d)^2$ such that either $\m$ or $\n$ is outside of $C$ (recall that $X$ is a tensor in $(\otimes_{j=1}^d\C^{n})\otimes (\otimes_{j=1}^d\C^{n})$). However, this will yield an algorithm which balances meeting the support constraint versus the low rank inducing functional $\Q_\gamma(\iota_{R^+_K})$, and hence the output may still be non-zero off $C$, which may be suitable depending on how certain the support estimate is. An alternative is to set $\A=\F$ (i.e.~only ``pure'' Fourier data) and use ADMM on the problem
\begin{equation}\label{fix rank ADMM}
\argmin_{X\in\h:~X(\m,\n)=0 \text{ for } \m\vee\n\not\in C}\Q_\gamma(\iota_{R_K^+})(X)+\frac{1}{2}\|\F(X)-b\|_{\ell^2(S)}^2
\end{equation}
which will force the solution to obey the support constraint. We leave it for further research to investigate these options from a practical perspective.

\subsection{Estimating $\|\A\|$ for masked Fourier data} \label{parameterschoice}

As explained in section \ref{sec quadratic}, the parameter $\gamma$ in the quadratic envelope needs to be chosen considering the size of $\|\A\|^2$, and since $\A$ in practice is a huge matrix it is not desirable to have to compute its norm. We therefore provide some rough estimates here depending only on the dimension of the images, in the simplest case when $m=n$. We thus assume that $\A$ is formed as explained in the first paragraph of the previous section, using $N_m$ number of masks (plus pure Fourier data). Let $\A_j$, $j=0,\ldots,N_m$ be the suboperator of $\A$ connected to a particular mask (so $\A_0=\F$ are those measurement where no mask is used.) In other words $\A_0(X)(\k)=\langle X,f_\k\otimes \overline{f_\k}\rangle$. The tensors $\{f_\k\otimes \overline{f_\k}:~\k\in\{0,\ldots,n-1\}^d\}$ are mutually orthogonal (since the $f_{\k}$'s are). Moreover $$\|f_\k\otimes \overline{f_\k}\|_2=\|f_\k\|_2^2=n^d=N,$$ so it follows that $\A_0$ is $N$ times a unitary operator, which has operator norm 1. Now, if we represent $\A_0$ as a matrix $\mathbf{A}_0$, then it is clear that $\mathbf{A}_j$ (for each $j\geq 1$), is obtained from $\mathbf{A}_0$ by replacing entire rows and columns by 0. This means that $\|\A_j\|\leq \|\A_0\|=N$ by basic estimates, and hence the triangle inequality implies that $\|\A\|\leq \sum_j\|\A_j\|\leq (N_m+1) N=M$. Summing up we have shown that \begin{equation}\label{estimateA}N\leq \|\A\|\leq M.\end{equation}

\section{Implementation aspects}\label{sec implementation}

In this section we show how to efficiently minimize \eqref{fix rank regularized} without additional constraints, (or at least how to compute a stationary point). Since \eqref{fix rank regularized} is of the type \( f +g  \) where $f=\mathcal{Q}_\gamma (\iota_{R_1 ^+})$ is non-convex but $g=\frac{1}{2}\|\A(\cdot)-b\|^2_{\ell^2(S)}$ is smooth, we use the Forward-Backward Splitting scheme, as this has been established to converge to a stationary point under assumptions which are applicable in our setting. Moreover we suggest to use the accelerated version FISTA, since we have observed that this significantly speeds up convergence. The algorithm alternates between a gradient update step and a proximal operator step. The computation of the gradient of \(g=\frac{1}{2}\|\A(\cdot)-b\|^2_{\ell^2(S)}\) is mathematically straightforward but very time consuming, and we will therefore introduce certain tricks based on FFT and Toeplitz matrices for efficient evaluation of this step when working with Fourier data, see Section \ref{ef grad}. The computation of the proximal operator  of \( \mathcal{Q}_\gamma (\iota_{R_K ^+})(X) = \mathcal{Q}_\gamma (\iota_{K}^+)(\lambda(X))  \)  is rather tricky, the details are given in Section \ref{ef prox}. We give here a general overview of all the functions involved. For concreteness we present the code when working with Fourier data and a number of masks, and leave it up to the reader to work out details for e.g.~ADMM routines with support constraints.

In FISTA, short for Fast Iterative Shrinkage-Thresholding Algorithm, the proximal-gradient steps are taken at the interpolation \[ X^k_{\text{int}}=X^{k+1} + \left( \frac{\theta_k -1}{\theta_{k+1}} \right) (X^{k+1} - X^{k}),   \] where \( \{ \theta_k \}_{ k\ge 1}  \) is a sequence of positive real numbers with some properties that ensure convergence in the convex case (see for instance \cite{Bec}). We used the sequence \( \theta_{k} = (k+1)/2  \), for \( k \ge 1  \) as suggested in Remark 10.35 of \cite{Bec}. \iffalse\texttt{N\_masks} is the number of masks, \texttt{iter} the number of iterations (that can be changed into a suitable stopping criterion on occasion), \texttt{zero\_pos} the positions of the zeros in the masks and \texttt{t} the stepsize of the algorithm (that can be fixed or adaptive, see \cite{Gol}).\fi Both $X^{k} $ and $X^{k+1}$ are initialized at zero. Upon choosing a step-size $t$ (which we discuss a little further on), the FISTA algorithm now alternates between the update steps \begin{itemize} \item[1.] Compute $X_{\text{int}}^k$.
\item[2.] $X=X^k-t \cdot \text{grad}$, where $\text{grad}$ is the gradient of $\|{\A}(\cdot) - b \|_{\ell^2(S)} ^2$, evaluated at $X^k_{\text{int}}$.
\item[3.] $X_{k+1}=\text{prox}_{\Q_{t\gamma}(\iota_{R_1}^+)}(X)$.
\end{itemize}
For the latter to be single valued, we need to have $\frac{1}{\gamma}>t$, which puts an upper bound to the choice of step-size $t$.

We now discuss suitable choices of $\gamma$ and $t$. Based on \eqref{estimateA} and the theory for the quadratic envelope, it would seem that $\gamma=M^2$ would be a natural choice, since it would guarantee that \eqref{fix rank} and \eqref{fix rank regularized} have the same global minimizer. However, a large $\gamma$ also means that $\Q_\gamma(\iota_{R_K^+})$ is, informally speaking, more non-convex, and we have found that the choice $\gamma=N^2$ gives a better performance. We have also observed that performance is rather stable with respect to changes in $\gamma$ around this value. With this choice, we get the bound $t<1/N^2$ for the step-size. However, the convergence of FISTA is guaranteed (in both convex and non-convex cases) if the stepsize \(t\) is \( < 1 / L(\nabla g ) \) (see \cite{Bau} and \cite{Att}), where \( g\) is the differentiable function and \( L(\nabla g ) \) the Lipschitz constant of its gradient, which in our case leads to $t\leq 1/\|\A\|^2$. Based on \eqref{estimateA}, we therefore used $t=1/(M^2+1)$ in our experiments. In our experience, larger step-size leads to faster convergence, but it is important to not exceed the bound.

%
%\begin{Verbatim}
%for k=1:iter
%    X_kInt=X_kPlus1 + (k-1)/(k+2)*(X_kPlus1 - X_k);
%    X_k=X_kPlus1;
%    gradient=GradQ(N,N_masks,X_kInt,zero_pos,meas_data);
%    X_k=X_k - t*gradient;
%    [P,D]=eig(X_k);
%    neweigenvalues=ProxIotaKP(k,gamma,t,real(diag(D)));
%    X_kPlus1=P*diag(neweigenvalues)*P';
%end	
%\end{Verbatim}
%
%
%We note that the way FBS is usually introduced, the stepsize $\texttt{t}$ relates to the ``proximal operator stepsize'' $\rho$, as in Proposition \eqref{p1}, via the formula $\rho=1/\texttt{t}$, which explains the quotient $1/\texttt{t}$ in line 7.

\subsection{Efficient computation of the gradient step}\label{ef grad}

We shall here only treat Fourier measurements with masks, in which case the gradient can be computed very efficiently by using the Fast Fourier Transform (FFT in short), but the details are a bit intricate. A technical problem is the representation of tensors as vectors in a computer. Given any enumeration of the index set $\{0,\ldots,n-1\}^d$ and a vector $x\in \otimes_{j=1}^d \C^{n}$, we denote the corresponding vector by $\x\in\C^N$. More precisely, we let $\beta_n:\{0,\ldots,n-1\}^d\rightarrow \{1,\ldots,N\}$ be a bijection and set $\x(\beta_n(\j))=x(\j)$. If for example $d=2$ (so we are dealing with images) and we vectorize by column-stacking, then $\beta_n(\j)=\beta_n(j_1,j_2)=j_2n+j_1+1.$

All vectors are realized as column-vectors, so that e.g.~$\x\x^*$ becomes a matrix. Similarly, if $M$ denotes the amount of elements in $S$ the function $b\in\ell^2(S)$ can be identified with a vector $\b\in\C^M$ by ordering the elements of $S$. Once both the domain $(\C^{n})^d$ and codomain $\ell^2(S)$ have been vectorized, the operators $\A$ and $\F$ get matrix representations that we denote by $\textbf{A}$ and $\textbf{F}$. If $\mathbf{a}_k$ relates to $\A$ (and $\mathbf{A}$) as described shortly before \eqref{lifted equations}, it is now easy to see that \begin{equation} \label{gradient} \nabla  \|\mathbf{A}(\cdot) - \b \|_2 ^2 (\X) = 2\sum_{k=1}^{M} ( \langle \mathbf{a}_k  \mathbf{a}_k^* , \X \rangle_F - b_k)  \mathbf{a}_k \mathbf{a}_k^* , \end{equation}
by basic multivariable calculus. 

We first consider the case of no masks and follow the oversampling recommendations from Section \ref{sec oversampling}; we pick a parameter $m$ with $n\leq m<2n$ and set $S=\{0,\ldots,m-1\}^d$. This set is then in bijective correspondence with $\{1,\ldots,M\}$ through $\beta_m$, and we will implicitly identify $k$ in the latter with $\k=\beta_m(k)$ in the former. Now $\mathbf{A}=\mathbf{F}$ and $\mathbf{a}_k$ derive from pure oscillatory exponentials as in \eqref{fkregular}.  More precisely $\mathbf{a}_k$ corresponds to $a_\k={f}_\k(\j)=e^{-2\pi i \frac{\j\k}{m}}$ as in \eqref{fkregular}, where $a_{\k}$ is the tensor counterpart of $\mathbf{a}_k$.

Similarly, since $\X$ represents an arbitrary matrix in $\M_N$ it implicitly defines a tensor $X$ on the tensor product of $\otimes_{j=1}^d\C^{n}$ with itself, by the formula $X(\j,\l)=\X(\beta_n(\j),\beta_n(\l))$. Summing up we have that $\langle \mathbf{a}_k  \mathbf{a}_k^* , \X \rangle_F$ equals $\langle f_\k \otimes\overline{f_\k} , X \rangle$ in the space $\big(\otimes_{j=1}^d\C^{n}\big)\otimes\big(\otimes_{j=1}^d\C^{n}\big)$ or, more explicitly
 \begin{equation}\label{lod}
\sum_{\j}\sum_{\l} e^{2\pi i\frac{(\l-\j)\k}{m}}\overline{ X(\j,\l)}=\sum_{\p}\sum_{\q} e^{2\pi i\frac{\p\k}{m}}\overline{ X(\q,\p+\q)}=\sum_{\p} e^{2\pi i\frac{\p\k}{m}}\sum_{\q}\overline{ X(\q,\p+\q)}
\end{equation} where the sums over $\j,\l$ run over all ${\{0,\ldots,n-1\}^d}$, $\p$ runs through $\{-n+1,\ldots,n-1\}^d$ and each coordinate of $\q$, say $q_1$, satisfies $\max(0,-p_1)\leq q_1\leq \min(n-1,n-1-p_1)$ to ensure that both $\q$ and $\p+\q$ are in ${\{0,\ldots,n-1\}^d}$. We now introduce $Y(\p)=\sum_{\q}{ X(\q,\p+\q)}$ (where summation bounds for $\q$ are as before) for $\p\in\{-n+1,\ldots,n-1\}^d$, and define $Y(\p)=0$ outside of this grid. In the case $d=1$ the operation above corresponds to summing over lines in $X$ parallel to the diagonal, as in a Toeplitz matrix. For the multivariable case the above is easily computed using $\X$, by simply noting that $Y(\p)=\sum_{\q}{ \X(\beta_n(\q),\beta_n(\p+\q))}$.

Clearly \eqref{lod} is a sort of inverse Fourier transform on $\overline{Y}$, and moreover the computation of $Y$ is fast; $O(N^2)$. To make use of FFT for fast evaluation of this expression jointly for all $\k\in\{0,\ldots,m-1\}^d$, we introduce $Y_{\textsc{mod}}(\p)=\sum_{\n\in\mathbb{Z}^d} f(\p+m\n)$ for all $\p\in\{0,\ldots,m-1\}^d$. Since usually $n\leq m\leq 2m$, note that for each $\p$ the sum contains at most 2 non-zero entries, so this operation is efficiently evaluated. Summing up we have that \begin{equation}\label{fft}\langle \mathbf{a}_k  \mathbf{a}_k^* , \X \rangle_F=\langle \mathbf{f}_\k  \mathbf{f}_\k^* , \X \rangle_F=\overline{DFT({Y_{\textsc{mod}}})(\k)},\end{equation}
where $DFT$ denotes the discrete Fourier transform on the grid $\{0,\ldots,m-1\}^d$. This can be efficiently evaluated (i.e.~$O(N^2\log(N)))$ in most programming languages using some preset implementation of the FFT. For example, when $d=3$ and when using MATLAB, we can represent $Y_{\textsc{MOD}}$ as a multidimensional array and evaluate \eqref{fft} using the command $\textrm{fftn}$.

Let us now return to the formula \eqref{gradient}. We denote the element computed in \eqref{fft} by $c_\k$, and similarly we introduce $b_\k=b_{\beta_m(k)}$. Then \eqref{gradient} takes the form
\begin{equation} \label{gradient1} \nabla  \|\mathbf{A}(\cdot) - \b \|_2 ^2 (\X) = 2\sum_{\k\in\{0,\ldots,m-1\}^{d}} ( c_\k - b_\k)  \mathbf{f}_\k \mathbf{f}_\k^* , \end{equation}
where $\mathbf{f}_\k$ is the vector representation of the tensor $f_\k$ via $\beta_n$, as before. At some fixed index pair $(p,q)\in \{1,\ldots,N\}^2$, this is evaluated as $$2\sum_{\k\in\{0,\ldots,m-1\}^{d}} ( c_\k - b_\k)  \mathbf{f}_\k \mathbf{f}_\k^*=2\sum_{\k\in\{0,\ldots,m-1\}^{d}} ( c_\k - b_\k)e^{2\pi i\frac{(\q-\p)\k}{m}}$$
where $\p=\beta_n(p)$ and $\q=\beta_n(q)$. Clearly all these $N^2=n^{2d}$ values reduce to (at most) $M=m^d$ different values, and can be computed by an inverse Fourier transform on the tensor $(c_\k-b_\k)$, which has time complexity $O(M^2\log(M))$.

This concludes the explanation of how to efficiently compute the gradient step \eqref{gradient} in the case of no masks. Luckily, the incorporation of masks poses very little additional difficulty. Say we have $N_m$ masks $C_1,\ldots, C_{N_m}$ as described in Section \ref{stabilizing}. Clearly then the computation of \eqref{gradient} can be separated in $N_m+1$ independent pieces, one for each mask. We just describe how to compute one of these contributions for a fixed mask $C$. In this case we again have that $k=1,\ldots,m^d$ and we may as well index the vectors using $\k\in\{0,\ldots,m-1\}^d$ instead. Now ${a}_\k$ corresponds to the tensor $\chi_{C}(\j)e^{-2\pi i\frac{\j\k}{m}}$ and $\mathbf{a}_\k$ to its vectorization via $\beta_n$. If $\mathbf{w}$ is the vectorization of $\chi_C$ via $\beta_n$ and $I_\mathbf{w}$ denotes the diagonal operator on $\C^N$ with $\mathbf{w}$ as diagonal, we thus have that $\mathbf{a}_\k=I_{\mathbf{w}}\mathbf{f}_\k$ with $\mathbf{f}_\k$ as before. Returning to the coefficients in \eqref{gradient} we have that $$\langle \mathbf{a}_\k  \mathbf{a}_\k^* , \X \rangle_F=\langle I_{\mathbf{w}} \mathbf{f}_\k  \mathbf{f}_\k^* I_{\mathbf{w}}, \X \rangle_F=\langle  \mathbf{f}_\k  \mathbf{f}_\k^* , I_{\mathbf{w}} \X I_{\mathbf{w}} \rangle_F$$
which means that we can use formula \eqref{fft} upon replacing $\X$ with $I_{\mathbf{w}} \X I_{\mathbf{w}}.$ Finally, since $\mathbf{a}_\k  \mathbf{a}_\k^*= I_{\mathbf{w}} \mathbf{f}_\k  \mathbf{f}_\k^* I_{\mathbf{w}}$ the summation in \eqref{gradient} can be evaluated as in \eqref{gradient1} with the difference that we insert a 0 in the final matrix on positions $(p,q)$ where either of the two coordinates $\p$ and $\q$ lie outside of $C$.

\subsection{Computation of the proximal operator}\label{ef prox}
	The expression of the proximal operator of \( \mathcal{Q}_\gamma (\iota_K ^+)  \) is quite involved. We first recall that \( \mathcal{Q}_\gamma (f) = \mathcal{S}_\gamma ( \mathcal{S}_\gamma(f)) \) where \( \mathcal{S}_\gamma (h)(x) \) is computed by first taking the Legendre transform of \( h + \frac{\gamma}{2}\| \cdot \|^2 \) and then subtracting $\frac{\gamma}{2}\| \cdot \|^2$. By an alteration of the Moreau decomposition (cfr. Chapter 14 of \cite{Bau}) it is possible to show (cfr. \cite{Car3}, Proposition 3.3) that if \( \rho > \gamma \) we have \begin{equation} \label{proxIotaKP} \text{prox}_{\mathcal{Q}_\gamma (\iota_K ^+) / \rho} ( \y ) = \frac{\rho \y - \gamma \cdot \text{prox}_{ \frac{\rho - \gamma}{\rho \gamma} \mathcal{S}_\gamma (\iota_K ^+)} (\y) }{ \rho - \gamma  }  \end{equation} where (see again \cite{Car3}) \[ \S_\gamma (\iota_K ^+)(\x)= \frac{1}{2} \left[ \sum_{j=1}^K |\max(\hat{x}_j,0)|^2 - \|\x\|^2 \right],  \] being \( \hat{\x} \) the decreasing rearrangement of \( \x \).
	We now sketch the routine for computing \( \text{prox}_{ \frac{\rho - \gamma}{\rho \gamma} \mathcal{S}_\gamma (\iota_K ^+)} (\y) \). After some basic algebra it turns out that \[ \text{prox}_{ \frac{\rho - \gamma}{\rho \gamma} \mathcal{S}_\gamma (\iota_K ^+)} (\y) = \argmin_{\x} (\rho - \gamma) \left( \sum_{i=1}^K \max(\hat{x}_i,0)^2 - \| \x \|^2 _2 \right) + \rho \|\x - \y\|^2 _2.  \] By the rearrangement inequality we may assume that \(\y\) is already ordered non increasingly and therefore the latter turns into \begin{equation} \label{prox_int1} \argmin_{x_1 \ge x_2 \ge \dots \ge x_n} (\rho - \gamma) \left( \sum_{i=1}^K \max(x_i,0)^2 - \| \x \|^2 _2 \right) + \rho \|\x - \y\|^2 _2.  \end{equation} For a given \( \x \) let \( \tilde{k}(\x)  \) be the minimum between \(K\) and the last index \(j\) for which \( x_j  \) is non-negative. Then \eqref{prox_int1} becomes \begin{equation} \label{prox_int2} \argmin_{x_1 \ge x_2 \ge \dots \ge x_n} \rho \sum_{i=1}^{ \tilde{k}(\x)  } (x_i^2 - 2 x_i y_i) + \sum_{i=\tilde{k}(\x)+1}^n (\gamma x_i ^2 - 2 \rho x_i y_i).  \end{equation} Now it is clear that if the vector \begin{equation} \label{sol1} \x =  \begin{cases} x_i = y_i & \text{if } i \le \tilde{k}(\y) \\ x_i = \frac{\rho}{\gamma} y_i & \text{if } i > \tilde{k}(\y) \end{cases} \end{equation} is  non-increasing then it is a global minimizer for \eqref{prox_int1}. In particular this happens whenever \(\y\) switches sign before \(K\) (i.e. \(\tilde{k}(\y)<K\)) or whenever \(y_K \ge \rho y_{K+1}/ \gamma \). We exclude these two scenarios from the further analysis.
	
	If \( \tilde{k}(\x) < K  \) then \( x_{\tilde{k}(\x)} \ge 0  \) and \( x_{\tilde{k}(\x)+1}<0  \), but looking at \eqref{prox_int2} we see that setting \( x_{\tilde{k}(\x)+1} = 0  \) would diminish the quantity (since \( y_{\tilde{k}(\x)+1} > 0 \)), a contradiction. Thus we have that \( \tilde{k}(\x)=K  \), and so \eqref{prox_int2} becomes \begin{equation} \label{prox_int3} \underset{x_K \ge 0}{\argmin_{x_1 \ge x_2 \ge \dots \ge x_n}} \rho \sum_{i=1}^{ K  } (x_i^2 - 2 x_i y_i) + \sum_{i=K+1}^n (\gamma x_i ^2 - 2 \rho x_i y_i).  \end{equation} Let \( \x \) be some candidate for the global minimum and set \( s := x_K  \). It is easy to see that \( \x \) must have the following structure: \begin{equation} \label{sol2} \x = \begin{cases} x_i = \max(s,y_i) & \text{if } i \le K \\ x_i= \min(s, \rho y_i / \gamma) & \text{if } i > K; \end{cases}  \end{equation} Now consider \eqref{sol2} inserted into \eqref{prox_int3}; except from an additive constant this function takes the form $$F(s)=\rho \sum_{i=1}^{ K  } (\max(s,y_i)- y_i)^2 + \gamma\sum_{i=K+1}^n (\min(s,\rho y_i/\gamma)  - \rho y_i/\gamma)^2.$$ By inspection it is clear that this function is convex and has a unique minimum on the interval \( V=[y_K,\rho y_{K+1} / \gamma ]\); indeed the first sum is constant on $(-\infty,y_K]$ and strictly convex (increasing) on $(y_K,\infty)$, whereas the second sum is strictly convex (decreasing) $(-\infty,\rho y_{K+1} / \gamma)$ and constant on $[\rho y_{K+1} / \gamma,\infty)$. We seek for the unique solution of \( \frac{d}{d s} F(s)=0  \) in $V$. Let \( j^* \) be the smallest index such that \( y_ {j^*}  \le \rho y_{K+1} / \gamma  \) and let \( l^* \) be the biggest index such that \( \rho y_{l^*} / \gamma \ge y_K  \); now the set \( \{ y_i  \}_{i=j^*}^K \cup \{ \rho y_i / \gamma  \}_{i=K+1}^{l^*}   \) provides a partition of \( V \). Let \(I\) be one of the subintervals. For all values of \( s \) in \(I\) let \(j\) be the first index such that \( x_j = s  \) and let \(l\) be the last, where $x_j$ is given by \eqref{sol2}. The formula for the solution of \( \frac{d}{d s} F(s)=0\) becomes \begin{equation} \label{der_sol} s_I= \frac{ \rho \sum_{i=j}^l y_i}{ (k+1-j)\gamma + (l-k) \rho  }.   \end{equation} If this value is outside \(I\) then the optimal \(s\) is to be found in another interval. By strict convexity \(s_I \in I \) will hold in at least one interval, and at most two intervals (when $s_I$ lies on the boundary).
	\newline
	
	In conclusion we can summarize the previous observations in an algorithm for computing the proximal operator:
	
	\begin{itemize}
		\item[1.] If \(y_K < 0 \) or \( \rho y_{K+1} / \gamma < y_K  \) return \eqref{sol1}, else
		
		\item[2.] compute \(j^* \) and \(l^*\);
		
		\item[3.] sort \(\{ y_i  \}_{i=j^*}^K \cup \{ \rho y_i/\gamma  \}_{i=K+1}^{l^*}\) decreasingly and call it \(\z \);
		
		\item[4.] for \( m\in \{1, \dots , l^* - j^* -1\} \) set \( s = (z_m + z_{m+1})/2 \) and compute the indices \(j\) and \(l\) as described above. Notice that the indices \(j\) and \(l\) are the same for all \( t \in [z_m,z_{m+1}] \), and this is why it is enough to consider the midpoints;
		
		\item[5.] compute the new value of \(s \) according to \eqref{der_sol};
		
		\item[6.] if \( z_m \ge s \ge z_{m+1}  \) stop and return \eqref{sol2}. Otherwise increase \(m\) with \(1\) and repeat the steps 4-5.
	\end{itemize}
	The ``for-loop'' 4-6 must stop since \(F\) is strictly convex and has a unique minimizer.

%	\begin{Verbatim}
%	function m = ProxIotaKP(k,gamma,t,y)
%	[y1,I]=sort(y,'descend');
%	Z=zeros(length(y));
%	for h=1:length(y)
%	Z(h,I(h))=1;
%	end
%	if  y1(k)<0
%	d=find(y1<0,1);
%	v1=y1(1:d);
%	v2=y1(d+1:length(y1))*(1/(gamma*t));
%	m1=(Z'*[v1; v2]);
%	m=(y/t - gamma*m1)/(1/t - gamma);
%	elseif (1/(gamma*t))*y1(k+1)<y1(k)
%	m1=Z'*sort(WeightSort(k,1/t,gamma,y1),'descend');
%	m=(y/t - gamma*m1)/(1/t - gamma);
%	else
%	y2=WeightSort(k,1/t,gamma,y1);
%	j=1;
%	while y2(j)>y2(k+1)
%	j=j+1;
%	end
%	l=k;
%	while  y2(l+1)>=y2(k)
%	l=l+1;
%	if l==length(y2)
%	break
%	end
%	end
%	z=sort(y2(j:l),'descend'); .
%	for i=1:length(z)-1
%	s=(z(i)+z(i+1))/2;
%	x=Candidate(k,s,y2);
%	j1=1;
%	while x(j1)~=s
%	j1=j1+1;
%	end
%	l1=length(y2);
%	while x(l1)~=s
%	l1=l1-1;
%	end
%	sI=((1/t)*sum(y1(j1:l1)))/((k+1-j1)*(1/t) + (l1-k)*gamma);
%	if (sI>=z(i+1) && z(i)>=sI)
%	m1=Z'*sort(Candidate(k,sI,y2),'descend');
%	m=(y/t - gamma*m1)/(1/t - gamma);
%	break
%	end
%	end
%	end
%	\end{Verbatim}
%	where \( t = 1/\rho \) is the stepsize and
%	\begin{Verbatim}
%	function w = Candidate(k,s,y)
%	w=zeros(length(y),1);
%	for i=1:k
%	w(i)=max(y(i),s);
%	end
%	for i=k+1:length(y)
%	w(i)=min(y(i),s);
%	end
%	end
%	
%	function v = WeightSort(k,rho,gamma,y)
%	y=sort(y,'descend');
%	v1=y(1:k);
%	v2=y(k+1:length(y)).*(rho/gamma);
%	v=[v1;v2];
%	
%	end
%	\end{Verbatim}
%	are two simple auxiliary functions.
	
\section{Numerical examples}\label{sec numerical}
We will illustrate the various results by conducting four experiments. In the first experiment we will compare the proposed method with PhaseLift and reweighted PhaseLift, in the second we test the oversampling conjectures from Section \ref{sec Fourier}, in the third we discuss the capacity of the proposed method of solving low rank PSD problems of any given rank as predicted in Section \ref{sec quadratic}, and finally we end with the reconstruction of a 2D image using the tricks of Section \ref{sec implementation}.

	\subsection{1D synthetic "masked" Fourier measurements}

\begin{figure}[H]
	\begin{minipage}[t]{0.46\textwidth}
		\includegraphics[width=\linewidth]{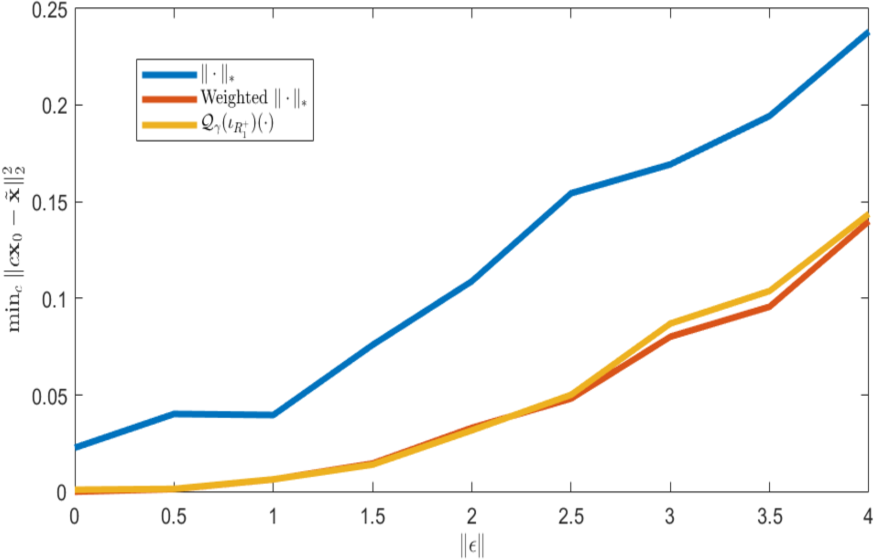}
		\caption{On x-axis the \( \ell^2 \) norm of the noise and on the y-axis \(D\). In blue performances of \eqref{minnuc}, in red of \eqref{reweighted} and in yellow of \eqref{fix rank regularized}}
	\end{minipage}
	\hspace*{\fill} % it's important not to leave blank lines before and after this command
	\begin{minipage}[t]{0.48\textwidth}
		\includegraphics[width=\linewidth]{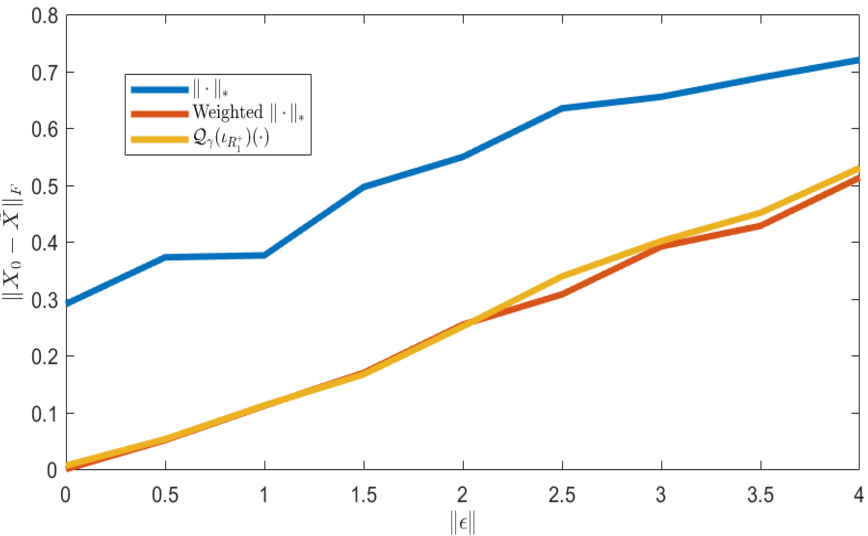}
		\caption{On x-axis the \( \ell^2 \) norm of the noise and on the y-axis the Frobenius norm of \( \tilde{X} - X_0 \). In blue performances of \eqref{minnuc}, in red of \eqref{reweighted} and in yellow of \eqref{fix rank regularized}}\label{fig1}
	\end{minipage}
\end{figure}

	In this subsection we consider 1-dimensional signals \( \x_0 \in \mathbb{C}^{100}  \) with $\|\x_0\|=1$ randomly generated with a Gaussian distribution; they represent our "ground truth", i.e. the signals that  we are interested in retrieving. The method we proposed is compared to the reweighted nuclear norm technique, following the numerical section of \cite{Can2}. The idea of reweighting the nuclear norm was heuristically introduced by \cite{Faz}. Despite of the lack of a systematic mathematical theory behind, this trick seemed to work quite well for our problem too, proving to be able to significantly enhance the capacity of the nuclear norm of finding low rank matrices.
	
	We consider binary masks, as introduced in Section \ref{stabilizing}. Following \cite{Can2} we use $m=n$ (oversampling is considered separately in the next section) and three masks, so $n=m=N=100$ and \(M=400  \). Note that this means that we use 400 equations plus a rank-PSD constraint to determine roughly 5000 variables in the lifted problem.	We focus our attention in comparing the approximated solutions to the two problems \eqref{fix rank regularized} and \begin{equation} \label{reweighted} \min_{X \in \mathbb{H}_{100}} \lambda \| W^{(l)} X \|_* + \frac{1}{2} \| \mathcal{A}(X) - {\b} \|_2 ^2, \ \text{for } l=1,2, \dots  \end{equation} where \(W^{(l)}\) are weight (diagonal) matrices and where $\A$ is as described in section \ref{stabilizing} and \( {\b}=\A(\x_0\x_0^*) + \boldsymbol{\epsilon} \) with \( \epsilon \in \mathbb{C}^{400} \) additive random gaussian noise.

\begin{wrapfigure}{r}{0.4\linewidth}\vspace{-0.4cm}
			\includegraphics[width=\linewidth]{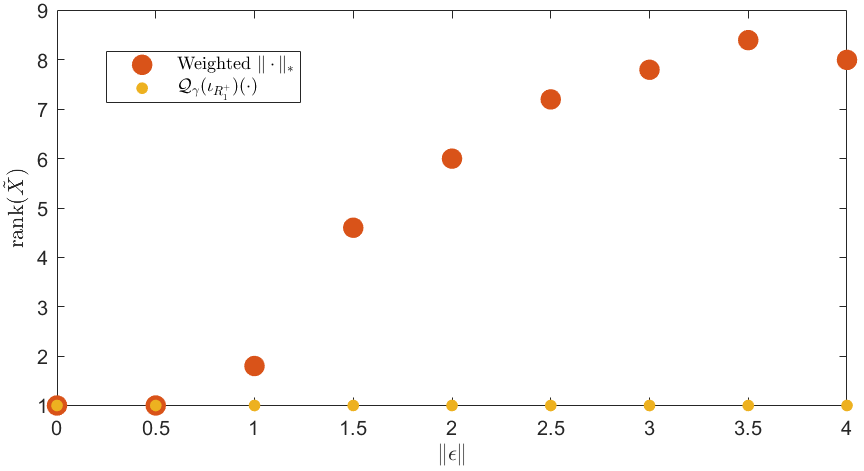}
			\caption{On x-axis the \( \ell^2 \) norm of the noise and on the y-axis the average rank of \( \tilde{X} \). In red performances of \eqref{reweighted}, in yellow of \eqref{fix rank regularized}. The rank was computed using the MATLAB function \texttt{rank} with tolerance \(10^{-6}\) for \eqref{fix rank regularized} and tolerance \(10^{-3} \) for \eqref{reweighted}}\label{fig2}
\end{wrapfigure}

The proposed method only contains one user parameter $\gamma$ which is fixed as described in Section \ref{sec implementation}, independent of noise level. The minimization of \eqref{reweighted} on the other hand relies on $\lambda$ which must scale with the noise for optimal performance, and moreover the update of the weights requires yet another parameter $\delta$ to be chosen; following \cite{Can5} we set \[ w_i^{(l+1)}=\frac{1}{\sigma(X^{(l)})_i + \delta}, \quad \delta >0 \] being \( X^{(l)}  \) the outcome of the algorithm after some fixed number of iterations with $W^{(l)}$. \(W^{(1)} \) is just the identity matrix.

 We have not found any clear guidance in the literature for how to pick $\lambda$ and $\delta$; in \cite{Can2} the algorithm is run several times for different $\lambda$'s picked according to a golden section search or cross validation. This is very time consuming, especially taking into account that the algorithm is slow to begin with due to the lifting trick. We have found that \( \lambda = 0.01 + 0.75  \|\epsilon\| \) and \( \delta = 0.01 + 0.05  \| \epsilon \| \) seemed to work fairly well in our experiments, and these are the values used in the graphs presented. The bisection search only gives a marginal improvement to the results shown, but it would be unfair to do this in comparison with \ref{fix rank regularized}, since this is run only once with one fixed choice of $\gamma$. For practical purposes, the fact that \eqref{fix rank regularized} does not rely on intricate parameter choices is clearly a strength.

	 To compute the numerical approximations we used the FISTA algorithm (already previously described) with \(10000\) iterations for \eqref{fix rank regularized} and with \( 10000 \) iterations for each weights update for \eqref{reweighted}; we found out that \( l>2\) does not give any significant contribution to reconstruction accuracy. The stepsize was fixed to \( t=1/(16N^2+1) = 1 / 1601 \)  (see section \ref{sec implementation}) and 5 different trials for each level of noise were carried out, in the sense that for each \( d \in \{0,0.5,1,1.5,2,2.5,3,3.5,4\}  \) we run five different experiments, i.e. we randomly generated five different ground truths \( X_0\) (and consequently five different noise vectors \(\epsilon\) with \(d=\|\epsilon\|\)), and we averaged the various outcomes over the number of trials. All the masks were randomly generated and the norm of the measured data vector is averagely \( \approx 20 \); in the graphs below we cover cases where the noise has up to \( 20 \% \) of the signal magnitude. According to \cite{Can2} the approximated signal reconstructed using \eqref{reweighted} is chosen as the eigenvector of the greatest eigenvalue of the output \(\tilde{X}\) of the algorithm. Since in \eqref{fix rank regularized} only rank 1 matrices are involved, the approximated signal will be the only nonzero eigenvector of \( \tilde{X} \). The underlying signal is unique only up to a global phase, therefore the distance between \( \x_0  \) and \( \tilde{\x}  \) is computed as \[ D=\min_{c \in \mathbb{C} \cap \{|z|=1\}} \|c \x_0 - \tilde{\x} \|^2 _2.  \]

In Figure \ref{fig1} we plot \(D\), \( \|\tilde{X} - X_0 \|_F \) and in Figure \ref{fig2} the average rank of \( \tilde{X} \). The graphs show that the performances of the three methods. It is clear that nuclear norm without reweighting is inferior in all aspects.
We focus therefore on discussing the proposed method versus reweighted nuclear norm. In terms of ``norms precision'', these are essentially comparable; nevertheless the reweighted nuclear norm fails in finding the correct rank for high levels of noise. Thus, when this method is equivalent with the proposed method, it uses more degrees of freedom. On the other hand the method that we propose is always able to retrieve rank 1 solutions, in perfect compliance with the theory developed.

Since determining the numerical rank is somewhat fishy, we include the following tables which show the first ten eigenvalues (decreasingly ordered) of the approximation \( \tilde{X}  \), computed respectively with \eqref{fix rank regularized} and \eqref{reweighted}, in each of the five different experiments run for the noise level \( \|\epsilon\|=3 \). As is plain to see, the solutions obtained by the reweighted nuclear norm are far from rank 1.

	\begin{table}[H]\parbox{.4\linewidth}{ \centering\begin{tiny}
		\begin{tabular}{||c c c c c c||}
			\hline
			& \textbf{Exp 1} & \textbf{Exp 2} & \textbf{Exp 3} & \textbf{Exp 4} & \textbf{Exp 5}  \\ [0.5ex]
			\hline\hline
			$\lambda_1$ & 0.9402 & 0.9419 & 0.9360 & 0.9461 & 0.9635 \\
			\hline
			$\lambda_2$ & 0.0489 & 0.0579 & -0.0487 & 0.0341 & 0.0439 \\
			\hline
			$\lambda_3$ & -0.0263 & -0.0327 & 0.0339 & 0.0318 & 0.0369 \\
			\hline
			$\lambda_4$ & 0.0260 & -0.0212 & 0.0227 & -0.0298 & 0.0274 \\
			\hline
			$\lambda_5$ & 0.0173 & 0.0158 & -0.0172 & -0.0177 & 0.0155 \\
			\hline
			$\lambda_6$ & -0.0119 & 0.0104 & 0.0100 & 0.0090 & -0.0118 \\
			\hline
			$\lambda_7$ & 0.0051 & 0.0046 & 0.0073 & 0.0061 & 0.0079 \\
			\hline
			$\lambda_8$ & 0 & 0.0003 & 0.0022 & 0.0050 & -0.0021 \\
			\hline
			$\lambda_9$ & 0 & 0 & 0 & 0.0031 & 0 \\
			\hline
			$\lambda_{10}$ & 0 & 0 & 0 & 0 & 0  \\  [1ex]
			\hline
		\end{tabular}\end{tiny}
	\caption{Reweighted nuclear norm eigenvalues table} }\hspace{2cm}
\parbox{.4\linewidth}{\centering \begin{tiny}
	\begin{tabular}{||c c c c c c||}
		\hline
		& \textbf{Exp 1} & \textbf{Exp 2} & \textbf{Exp 3} & \textbf{Exp 4} & \textbf{Exp 5}  \\ [0.5ex]
		\hline\hline
		$\lambda_1$ & 1.0065 & 1.0127 & 1.0340 & 1.0051 & 1.0368 \\
		\hline
		$\lambda_2$ & 0 & 0 & 0 & 0 & 0 \\
		\hline
		$\lambda_3$ & 0 & 0 & 0 & 0 & 0 \\
		\hline
		$\lambda_4$ & 0 & 0 & 0 & 0 & 0 \\
		\hline
		$\lambda_5$ & 0 & 0 & 0 & 0 & 0 \\
		\hline
		$\lambda_6$ & 0 & 0 & 0 & 0 & 0 \\
		\hline
		$\lambda_7$ & 0 & 0 & 0 & 0 & 0 \\
		\hline
		$\lambda_8$ & 0 & 0 & 0 & 0 & 0 \\
		\hline
		$\lambda_9$ & 0 & 0 & 0 & 0 & 0 \\
		\hline
		$\lambda_{10}$ & 0 & 0 & 0 & 0 & 0  \\  [1ex]
		\hline
	\end{tabular}\end{tiny}
\caption{Quadratic envelope eigenvalues table}}
\end{table}

\subsection{Stabilizing by oversampling}\label{sec num over}

In broad terms, the conclusion of Section \ref{sec Fourier} is that it is desirable to oversample a factor 2 (in each dimension), but beyond this point no further gain is expected. We test this for the one dimensional case on synthetic data of size $N=25$ with an identical setup to the one above. %Given a normalized random signal $\x_0$ we set $\X_0=\x_0^*\x_0$ and let $\tilde \X$ denote the output of our algorithm upon convergence (we run 10000 iterations which is more than sufficient for the algorithm to converge).
As measurement we use $\b=\A( \X_0)+\boldsymbol{\epsilon}$ where $\boldsymbol{\epsilon}$ is noise which we vary between 0 and $20\%$ of the magnitude of $\b$. Each data point in the below graphs is the average of five distinct trials.

A numerical observation in \cite{Can2} is that one needs 3 masks to have stable inversion. The authors also point out that a low residual error $\|\A(\tilde \X)-\b\|$ in combination with a poor actual error $\|\tilde \X-\X_0\|$ is an indicator of having an ill-posed inverse problem, i.e.~one where several different points minimize the functional in question.
Below we plot both graphs as a function of the Noise to Signal Ratio $\|\boldsymbol{\epsilon}\|/\|\b\|$.
\begin{figure}[H]
	\begin{minipage}[t]{0.46\textwidth}
		\includegraphics[width=\linewidth]{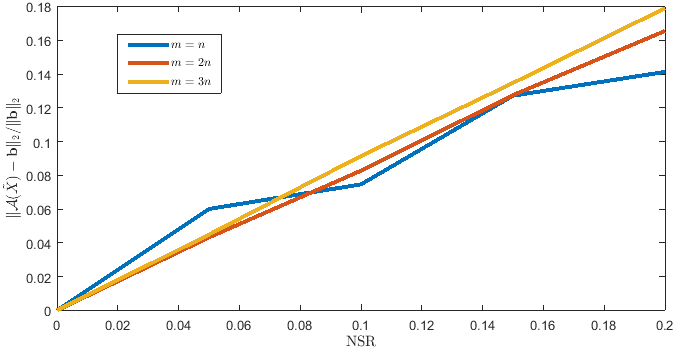}
		\caption{Normalized residual error as a function of normalized noise, 3 masks}
	\end{minipage}
	\hspace*{\fill} % it's important not to leave blank lines before and after this command
	\begin{minipage}[t]{0.46\textwidth}
		\includegraphics[width=\linewidth]{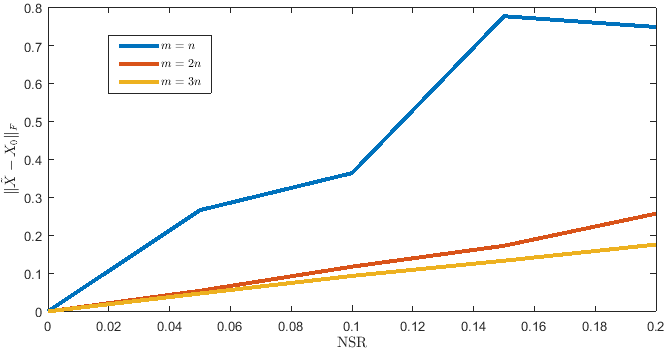}
		\caption{Normalized reconstruction error (distance to ground truth) as function of normalized noise, 3 masks}
	\end{minipage}
\end{figure}

To the left we see that the residual errors are almost identical, and moreover close to the line $y=x$, as expected, since it is clearly unlikely to beat the noise level by any notable margin. Indeed, if we were to get $\tilde \X=\X_0$ then clearly $\|\A(\tilde \X)-\b\|=\|\boldsymbol{\epsilon}\|$, so the above graph actually tells us that we most of the time find a solution better than ground truth.

To the right we see the normalized reconstruction error, and we can confirm that the expectations from Section \ref{sec Fourier} is in perfect compliance with the outcome. Oversampling by a factor two has a significant effect in improving the actual error, whereas oversampling by a factor 3 only has a marginal effect, as predicted by Proposition \ref{p2}. Noteworthy is that also the latter two curves are close to ``$y=x$''.

Encouraged by this result we now try the same setup with 2 masks. As is plain to see, the pattern repeats itself with the major difference that $m=n$ is completely unreliable except for 0 noise, whereas $m=2n$ and $m=3n$ do a similar job which, although not extremely convincing, clearly outperforms $m=n$. In conclusion, it seems that the recommendation of oversampling with a factor of two has both theoretical and numerical support, at least in the case $d=1$.
\begin{figure}[H]
	\begin{minipage}[t]{0.46\textwidth}
		\includegraphics[width=\linewidth]{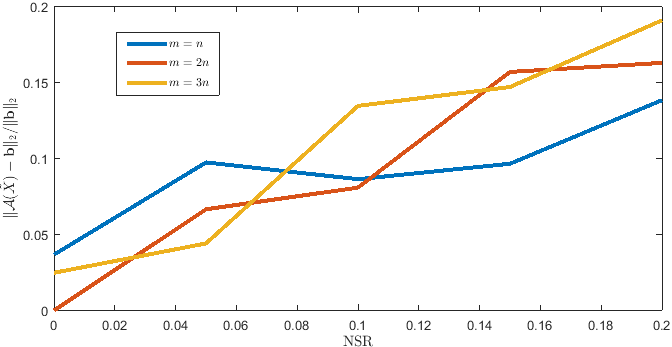}
		\caption{Residual error, 2 masks}
	\end{minipage}
	\hspace*{\fill} % it's important not to leave blank lines before and after this command
	\begin{minipage}[t]{0.46\textwidth}
		\includegraphics[width=\linewidth]{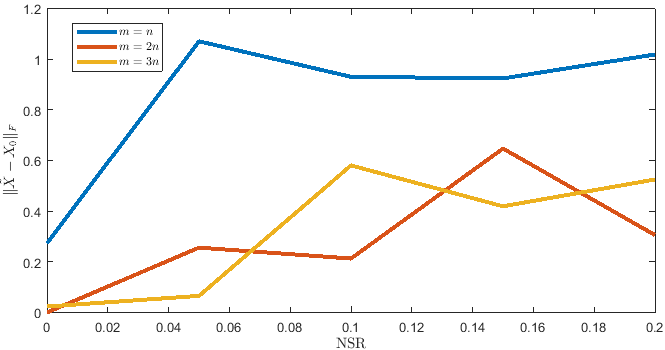}
		\caption{Reconstruction error, 2 masks}
	\end{minipage}
\end{figure}

\subsection{Finding the correct rank for general PSD estimation problems}
We have ran extensive tests for minimizing \eqref{fix rank regularized} with different choices of $\A$, $\b$ and ranks $K$, and never found a single instance where the algorithm converges to a point with the wrong rank, i.e. different from $K$, as long as $\gamma>\|A\|^2$.\footnote{In practice it may be beneficial to violate this restriction, as explained in Section \ref{sec implementation}} This is in accordance with the theory for quadratic envelopes as developed in \cite{Car2} and summarized in Section \ref{sec quadratic}, stating that local minimizers of \eqref{fix rank regularized} form a subset of those for \eqref{fix rank}, except for the fact that FBS is only guaranteed to find \textit{stationary points},\footnote{At least relying on the theory summarized in Section \ref{sec implementation}} not necessarily local minima. This indicates that either there are no saddle point type stationary points, or that FBS actually has the capacity to avoid them. We do not underline these observations with any specific graph, since it seems impossible to design one experiment that would cover all different types of potential applications.

\subsection{2D image reconstruction}
\begin{wrapfigure}{l}{0.12\linewidth}\vspace{-0.4cm}
	\includegraphics[width=\linewidth]{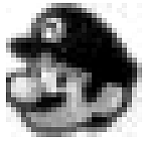}
	\includegraphics[width=\linewidth]{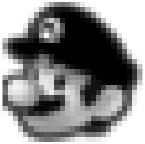}

\end{wrapfigure}
We complement this section by displaying a 2D reconstruction example. The example chosen is a \( 27 \times 27 \) pixels grayscale real image; the initial measurements were contaminated with \( 1 \% \) of additive gaussian random noise. The algorithm used was again FISTA with eight binary masks and the parameters choice indicated in Section \ref{parameterschoice} where we employed the tricks of Section \ref{ef grad} for efficient evaluation via FFT of the gradient-step. The reconstruction was made on a standard laptop and the bottleneck timewise for the algorithm is the eigendecomposition, which prohibits larger images to be processed. We plan to address this shortcoming in future works, which clearly needs to be solved for the algorithm (as well as PhaseLift) to be of practical use for 2D or 3D imaging.

Our experience suggested that the 2D problem needs a higher number of masks (with respect to the 1D case) in order to be stabilized; this observation is present in \cite{Can2} as well where, in the noisefree case, eight binary masks are used. We used $m=n$ for simplicity, it is of course possible that oversampling could lead to fewer masks, but we have not yet tested this.

On the side, the original image is the second while the first is our reconstruction.

%\clearpage


\begin{thebibliography}{9}
	\bibitem[AnC17]{And} F. Andersson and M. Carlsson, \emph{On the Structure of Positive Semi-Definite Finite Rank General Domain Hankel and Toeplitz Operators in Several Variables}. Complex Analysis and Operator Theory, 11(4), pp. 755-784, 2017.
	
	\bibitem[ABS13]{Att} H. Attouch, J. Bolte and B. F. Svaiter, \emph{Convergence of descent methods for semi-algebraic and tame problems: proximal algorithms, forward-backward splitting, and regularized Gauss-Seidel methods}. Mathematical Programming, 137(1-2), pp. 91-129, 2013.
	
	\bibitem[BCE06]{Bal} R. Balan, P. Casazza and D. Edidin, \emph{On signal reconstruction without phase}. Applied and Computational Harmonic Analysis, 20(3), pp. 345-356, 2006.
	
	\bibitem[BCo10]{Bau} H. H. Bauschke and P. L. Combettes, \emph{Convex Analysis and Monotone Operator Theory in Hilbert Spaces}. Springer, CMS Books in Mathematics, 2010.
	
	\bibitem[Bec17]{Bec} A. Beck, \emph{First-Order Methods in Optimization}. MOS-SIAM Series in Optimization, 2017.
	
	\bibitem[BeP15]{Bei} R. Beinert and G. Plonka, \emph{Ambiguities in one-dimensional phase retrieval of structured functions}. Proceedings in Applied Mathematics and Mechanics, 15(1), pp. 653-654, 2015.
	
	\bibitem[BBE17]{Ben} T. Bendory, R. Beinert and Y. C. Eldar, \emph{Fourier Phase Retrieval: Uniqueness and Algorithms}. Preprint, \href{https://arxiv.org/abs/1705.09590v3}{arXiv:1705.09590v3}, 2017.
	
	\bibitem[BDL07]{Bol} J. Bolte, A. Daniilidis and A. Lewis, \emph{The Lojasiewicz Inequality for Nonsmooth Subanalytic Functions with Applications to Subgradient Dynamical Systems}. SIAM J. Optim., 17(4), pp.1205-1223, 2007.
	
	\bibitem[BoL05]{Bor} J. M. Borwein and A. S. Lewis, \emph{Convex Analysis and Nonlinear Optimization - Theory and Examples}. CMS books in Mathematics, 2005.
	
	
	\bibitem[Bre65]{Bre} L. M. Bregman, \emph{The method of successive projections for finding a common point of convex sets}. Soviet Math. Dokl., 6, pp. 688-692, 1965.
	
	\bibitem[BrS79]{Bru} Yu. M. Bruck and L. G. Sodin, \emph{On the ambiguity of the image reconstruction problem}. Optics Communications, 30(3), pp. 304-308, 1979.
	
	\bibitem[CES13]{Can2} E. J. Cand\`es, Y. C. Eldar, T. Strohmer and V. Voroninski, \emph{Phase Retrieval via Matrix Completion}. SIAM J. Imaging Sci., 6(1), pp. 199-225, 2013.
	
	\bibitem[CSV13]{Can} E. J. Cand\`es, T. Strohmer and V. Voroninski, \emph{Phaselift: Exact and stable signal recovery from magnitude measurements via convex programming}. Commun. Pure Appl. Math., vol. 66, no. 8, pp. 1241-1274, 2013.
	
	\bibitem[CaL14]{Can3} E. J. Cand\`es and X. Li, \emph{Solving quadratic equations via PhaseLift when there are about as many equations as unknowns}. Foundations of Computational Mathematics, 14(5), pp. 1017-1026, 2014.
	
	\bibitem[CLS15]{Can4} E. J. Cand\`es, X. Li and M. Soltanolkotabi, \emph{Phase Retrieval via Wirtinger Flow: Theory and Algorithms}. IEEE Transactions on Information Theory, 61(4), pp. 1985-2007, 2005.
	
	\bibitem[CWB08]{Can5} E. J. Cand\`es, M. B. Wakin and S. P. Boyd, \emph{Enhancing Sparsity by Reweighted $\ell^1$ Minimization}. Journal of Fourier Analysis and Applications, 14(5-6), pp. 877–905, 2008.

	\bibitem[CGO18]{Car} M. Carlsson, D. Gerosa and C. Olsson, \emph{An unbiased approach to compressed sensing}. Preprint, \href{https://arxiv.org/abs/1806.05283}{arXiv:1806.05283v4}, 2019.
	
	\bibitem[CGO19]{Car4} M. Carlsson, D. Gerosa and C. Olsson, \emph{An unbiased approach to low rank recovery}. Arxiv preprint, identifier 2865217.
	
	\bibitem[Car19]{Car2} M. Carlsson, \emph{On Convex Envelopes and Regularization of Non-convex Functionals Without Moving Global Minima.} J Optim Theory Appl, 183(1), pp. 66-84, 2019.
	
	\bibitem[Car16]{Car3} M. Carlsson, \emph{On convexification/optimization of functionals including an \( \ell^2 \)-misfit term.} Preprint, \href{https://arxiv.org/pdf/1609.09378.pdf}{arXiv:1609.09378v4}.
	
\bibitem[CBM06]{Cha} H. Chapman, A. Barty, S. Marchesini, A. Noy, S. Hau-Riege, C. Cui, M. Howells, R. Rosen, H. He, J. Spence, U. Weierstall, T. Beetz, C. Jacobsen and D. Shapiro.
 \emph{High-resolution ab initio three-dimensional x-ray diffraction microscopy}. J. Opt. Soc. Am. A 23.5, pp. 1179--1200, 2006.

\bibitem[CZL14]{Chu} Y. Chushkin, F. Zontone, E. Lima, L. De Caro, P. Guardia, L. Manna, and C. Giannini.
 \emph{Three-dimensional coherent diffractive imaging on non-periodic specimens at the ESRF beamline ID10}. J. Synch. Rad. 21.3, pp. 594--599, 2014.


	\bibitem[CEH15]{Con} A. Conca, D. Edidin, M. Hering and C. Vinzant, \emph{An algebraic characterization of injectivity in phase retrieval}. Applied and Computational Harmonic Analysis, 38(2), pp. 346-356, 2015.
	
	\bibitem[Els03]{Els} V. Elser, \emph{Solution of the crystallographic phase problem by iterated projections}. Acta Crystallographica Section A: Foundations of Crystallography, 58(3), pp. 201-209, 2003.
	
	\bibitem[FHB01]{Faz} M. Fazel, H. Hindi and S. Boyd, \emph{A Rank Minimization Heuristic with Application to Minimum Order System Approximation}. Proceedings American Control Conference, 6:4734-4739, June 2001.
	
	\bibitem[Fie82]{Fie} J. R. Fienup, \emph{Phase retrieval algorithms: a comparison}. Appl. Opt. 21, pp. 2758-2769, 1982.
	
	\bibitem[FWd15]{Fog} F. Fogel, I. Waldspurger and A. d'Aspremont, \emph{Phase retrieval for imaging problems}. Mathematical Programming Computation, 8(3), pp. 311-335, 2015.
	
	\bibitem[GSa72]{Ger} R. W. Gerchberg and W. O. Saxton, \emph{A practical algorithm for the determination of the phase from image and diffraction plane pictures}. Optik, 35(2), pp. 237-246, 1972.
	
	\bibitem[GSB16]{Gol} T. Goldstein, C. Studer and R. Baraniuk, \emph{A Field Guide to Forward-Backward Splitting with a FASTA implementation}. Preprint, \href{https://arxiv.org/abs/1411.3406}{arXiv:1411.3406v6}
	
	\bibitem[Hay82]{Hay} M. Hayes, \emph{The reconstruction of a multidimensional sequence from the phase or magnitude of its Fourier transform}. IEEE Transactions on Acoustics, Speech, and Signal Processing, 30(2), pp. 140-154, 1982.
	
	\bibitem[Hof64]{Hof} E. Hofstetter, \emph{Construction of time-limited functions with specified autocorrelation functions}. IEEE Transactions on Information Theory, 10(2), pp. 119-126, 1964.
	
	\bibitem[HES16]{Hua} K. Huang, Y. C. Eldar and N. D. Sidiropoulos, \emph{On convexity and identifiability in 1-D Fourier phase retrieval}. 2016 IEEE International Conference on Acoustics, Speech and Signal Processing (ICASSP), 2016.
	
	\bibitem[Kec18]{Kec} M. Kech, \emph{Explicit frames for deterministic phase retrieval via PhaseLift}. Applied and Computational Harmonic Analysis, 45(2), pp. 282-298, 2018.
	
	\bibitem[Lew96]{Lew} A. S. Lewis, \emph{Convex Analysis on Hermitian Matrices}. SIAM J. Optim., 6(1), pp. 164-177, 1996.
	
	\bibitem[Luk05]{Luk} D. R. Luke, \emph{Relaxed averaged alternating reflections for diffraction imaging}. Inverse Problems, 21(1), pp. 37-50, 2005.
	
	\bibitem[Mar07]{Mar} S. Marchesini, \emph{A unified evaluation of iterative projection algorithms for phase retrieval}. Review of Scientific Instruments 78, 011301, 2007.
	
	\bibitem[MBK16]{Mare} S. Maretzke, M. Bartels, M. Krenkel, T. Salditt, and T. Hohage, \emph{Regularized Newton methods for x-ray phase contrast and general imaging problems}. Optics Express, 24(6), pp. 6490-6506, 2016.

	\bibitem[MSC98]{Mia} J. Miao, D. Sayre, HN Chapman, \emph{Phase retrieval from the magnitude of the Fourier transforms of nonperiodic objects}. JOSA A, 15(6), pp. 1662-1669, 1998.


	\bibitem[OCG19]{Ols} C. Olsson, M. Carlsson and D. Gerosa, \emph{Bias Reduction in Compressed Sensing}. Preprint, \href{https://arxiv.org/abs/1812.11329v1}{arXiv:1812.11329v1}.
	
	\bibitem[Osh12]{Osh} E. Osherovich. \emph{Numerical methods for phase retrieval}. Preprint, \href{https://arxiv.org/abs/1203.4756}{arXiv:1203.4756v1}.
	
	\bibitem[Pag06]{Pag} D. M. Paganin, \emph{Coherent X-Ray Optics}. Oxford Series on Synchrotron Radiation, 2006.

	\bibitem[RFP10]{Recht} B. Recht, M. Fazel and P. A. Parrilo, \emph{Guaranteed minimum-rank solutions of linear matrix equations via nuclear norm minimization.} SIAM review, 52(3), pp. 471-501, 2010.
	
	\bibitem[SCa91]{Sah} H. Sahinoglou and S. Cabrera, \emph{On phase retrieval of finite-length sequences using the initial time sample.} IEEE Transactions on Circuits and Systems, 38(5), pp. 954-958, 1991.
	
	\bibitem[SEC15]{She} Y. Shechtman, Y. C. Eldar, O. Cohen, H. N. Chapman, M. Jianwei and M. Segev, \emph{Phase Retrieval with Application to Optical Imaging: A contemporary overview}. Signal Processing Magazine, IEEE 32 (3), 87 (2015) 10.1109/MSP.2014.2352673.
	
	\bibitem[Sol14]{Sol} M. Soltanolkotabi, \emph{Algorithms and Theory for Clustering and Nonconvex Quadratic Programming}. PhD dissertation, 2014.
	
	
	


	
\end{thebibliography}
\end{document}